\documentclass[11pt]{amsart}

\usepackage[T1]{fontenc}
\usepackage{lmodern}
\usepackage{microtype}
\usepackage{mathtools}
\usepackage{amssymb}
\usepackage[hidelinks]{hyperref}

\newtheorem{theorem}{Theorem}[section]
\newtheorem{lemma}[theorem]{Lemma}
\newtheorem{proposition}[theorem]{Proposition}
\newtheorem{corollary}[theorem]{Corollary}
\theoremstyle{remark}
\newtheorem{remark}[theorem]{Remark}
\newtheorem{conjecture}[theorem]{Conjecture}

\numberwithin{equation}{section}

\newcommand{\Nzero}{\mathbb N_0}

\newcommand{\dd}{\,d}

\title[The EMN and Krasikov conjectures]
{The Erd\'elyi--Magnus--Nevai and Krasikov Conjectures for Jacobi Polynomials}
\author{Qi-Feng Bai}
\address{School of Mathematics and Statistics, Nanfang College, Guangzhou, 510970, Guangdong, China}
\email{baiqifeng@nfu.edu.cn}
\author{Yu-Tian Li}
\address{School of Mathematics and Statistics, Nanfang College, Guangzhou, 510970, Guangdong, China}
\email{yutianlee@gmail.com}
\date{}

\subjclass[2020]{Primary 33C45; Secondary 26D05, 41A17, 42C05}
\keywords{Jacobi polynomials, Bernstein-type inequality, Erd\'elyi--Magnus--Nevai
conjecture, Krasikov conjecture, Christoffel function, contiguous relations}

\hypersetup{
  pdftitle={The Erdelyi--Magnus--Nevai and Krasikov Conjectures for Jacobi Polynomials},
  pdfauthor={Qi-Feng Bai and Yu-Tian Li}}

\begin{document}

\begin{abstract}
Let $p_n^{(\alpha,\beta)}$ denote the Jacobi polynomial orthonormal for the weight
$(1-x)^\alpha(1+x)^\beta$ on $[-1,1]$, where
$\alpha,\beta\ge-1/2$, and put $S=\alpha+\beta+1$. We prove the uniform
degree--parameter estimate
\[
 (1-x)^{\alpha+1/2}(1+x)^{\beta+1/2}
 \bigl|p_n^{(\alpha,\beta)}(x)\bigr|^2
 \le C\max\bigl\{1,S^{1/3},S^{1/2}(n+1)^{-1/6}\bigr\}.
\]
This proves, in an equivalent symmetric parametrisation, the stronger
degree-sensitive conjecture proposed by Krasikov and implies the
Erd\'elyi--Magnus--Nevai conjecture. The proof starts from Krasikov's estimate in
the high-parameter quadrant and transports it to the hard edges through weighted
contiguous relations whose singular endpoint terms cancel; direct hypergeometric
estimates control the remaining endpoint caps. A Bessel turning-point argument
shows that the intermediate factor $S^{1/3}$ in the squared estimate cannot be
omitted. We also derive degree-sensitive lower bounds for Jacobi Christoffel
functions and Gauss--Jacobi quadrature weights.
\end{abstract}

\subjclass[2020]{Primary 33C45; Secondary 41A17, 42C05}

\keywords{Jacobi polynomials, Bernstein-type inequalities,
Erd\'elyi--Magnus--Nevai conjecture, Krasikov conjecture,
uniform degree--parameter estimates, Christoffel functions,
contiguous relations, Gauss--Jacobi quadrature}

\maketitle

\section{Introduction}\label{sec:intro}

Let
\[
w_{\alpha,\beta}(x)=(1-x)^\alpha(1+x)^\beta,\qquad -1<x<1,
\]
let $P_n^{(\alpha,\beta)}$ be the Jacobi polynomial in its standard
normalisation, and let $p_n^{(\alpha,\beta)}$ be the polynomial of degree $n$
orthonormal in $L^2([-1,1],w_{\alpha,\beta}\dd x)$. A Bernstein-type inequality
for these polynomials estimates
\begin{equation}\label{eq:Fdef}
F_{n,\alpha,\beta}(x)
=(1-x^2)^{1/4}\,w_{\alpha,\beta}(x)^{1/2}\,
\bigl|p_n^{(\alpha,\beta)}(x)\bigr|,
\end{equation}
the polynomial damped both by the square root of its own weight and by the factor
that compensates the oscillatory singularity at the endpoints. Bernstein's
classical inequality for Legendre polynomials is the case $\alpha=\beta=0$; for
general parameters the subject was developed by Chow, Gatteschi and Wong
\cite{ChowGatteschiWong}, by Gautschi \cite{Gautschi}, and, in the direction that
concerns us, by Erd\'elyi, Magnus and Nevai \cite{EMN}.

Writing $F=F_{n,\alpha,\beta}$ for brevity, one has the exact identity
\begin{equation}\label{eq:Fsquare}
F(x)^2=(1-x)^{\alpha+1/2}(1+x)^{\beta+1/2}\bigl|p_n^{(\alpha,\beta)}(x)\bigr|^2 ,
\end{equation}
and it is in this form that the problem is usually stated. Erd\'elyi, Magnus and
Nevai proved \cite[Thm.~1]{EMN} that for all $n\ge0$ and all
$\alpha,\beta\ge-1/2$,
\begin{equation}\label{eq:EMNthm}
\max_{-1\le x\le1}F_{n,\alpha,\beta}(x)^2
\ =\ O\bigl(\max\{1,(\alpha^2+\beta^2)^{1/2}\}\bigr),
\end{equation}
with an explicit absolute implied constant, and they conjectured that the
exponent $1/2$ in \eqref{eq:EMNthm} may be halved:
\begin{equation}\label{eq:EMNconj}
\max_{-1\le x\le1}F_{n,\alpha,\beta}(x)^2
=O\bigl(\max\{1,(\alpha^2+\beta^2)^{1/4}\}\bigr),
\end{equation}
uniformly in $n\ge0$ and $\alpha,\beta\ge-1/2$. This is the
Erd\'elyi--Magnus--Nevai (EMN) conjecture.

Krasikov subsequently proposed a stronger, degree-sensitive estimate
\cite{KrasikovMaximum}, restated as Conjecture~2 in \cite{KrasikovEMN}. After
reflection, write $a=\max\{\alpha,\beta\}$; for $n\ge1$ his conjecture is
\begin{equation}\label{eq:krasikov-conjecture}
 \max_{-1\le x\le1}F_{n,\alpha,\beta}(x)^2
 =O\!\left(\max\left\{1,|a|^{1/3}
 \left(1+\frac{|a|}{n}\right)^{1/6}\right\}\right).
\end{equation}
To state our result symmetrically and include degree zero, put
\begin{equation}\label{eq:envelope-def}
 \begin{gathered}
 S=\alpha+\beta+1,\\
 \Phi_n(S)=\max\left\{1,S^{1/3},\frac{S^{1/2}}{(n+1)^{1/6}}\right\},\\
 \mathcal E_n(S)=\Phi_n(S)^{1/2}.
 \end{gathered}
\end{equation}
For $n\ge1$, $\Phi_n(S)$ is equivalent, up to absolute constants, to the
right-hand side of \eqref{eq:krasikov-conjecture}; see
Remark~\ref{rem:krasikov-equivalence}. Thus the main theorem below proves
Krasikov's conjecture in its big-$O$ sense and gives its natural degree-zero
extension. Since $\mathcal E_n(S)\le\max\{1,S^{1/4}\}$, it also proves the EMN
conjecture. The separate terms in \eqref{eq:envelope-def} record three regimes:
bounded size, the Bessel turning-point scale $S^{1/6}$, and the fixed-degree
scale $S^{1/4}(n+1)^{-1/12}$.

\subsection*{What was known}
Set
\begin{equation}\label{eq:A0}
A_0=\frac{1+\sqrt2}{4}=0.60355\ldots
\end{equation}
Krasikov \cite[Thm.~2]{KrasikovUpper} proved that
$\max_xF_{n,\alpha,\beta}^2<3\alpha^{1/3}(1+\alpha/n)^{1/6}$ whenever $n\ge6$ and
$\alpha\ge\beta\ge A_0$; this has precisely the degree--parameter order in
\eqref{eq:envelope-def} on the quadrant $[A_0,\infty)^2$. His later paper
\cite{KrasikovEMN} proved the conjectured order in the ultraspherical case
$\alpha=\beta$. Haagerup and Schlichtkrull \cite{HaagerupSchlichtkrull} obtained
an inequality valid for all $n\ge0$ and all $\alpha,\beta\ge0$, but in
orthonormal form it carries a factor $(2n+\alpha+\beta+1)^{1/4}$ and so does not
give a degree-uniform statement. Christiansen and Rubin
\cite[\S3]{ChristiansenRubin}, in their work on Chebyshev polynomials for Jacobi
weights, observed that their Theorem~3.1 yields a uniform bound when
$|\alpha|,|\beta|\le1/2$; because $S$ is bounded there, this is also the order in
\eqref{eq:envelope-def}.

Thus the required estimate was known on
\begin{equation}\label{eq:known}
[A_0,\infty)^2\ \cup\ [-\tfrac12,\tfrac12]^2 ,
\end{equation}
but, to the best of our knowledge, not on the complement of \eqref{eq:known} in
$\{\alpha,\beta\ge-\frac12\}$: two unbounded \emph{mixed strips}, in which one
parameter is large while the other lies near the hard edge $-1/2$, together with
two small rectangles adjacent to the square. The mixed strips are where the
problem is genuinely hard. The standard uniform asymptotic estimates used for
Jacobi polynomials lose uniformity as $\beta\downarrow-1/2$, where the endpoint
weight exponent $\beta+1/2$ tends to zero; and the obvious remedy---raise
$\beta$ into Krasikov's range using a
contiguous relation---fails because the relations available do not respect the
weight factor $(1+x)^{\beta/2}$ in \eqref{eq:Fdef}, and produce coefficients that
blow up exactly at $x=-1$.

Two lines of the strips do reduce to what is known, by a mechanism worth
recording: the quadratic transformations of the Jacobi polynomials preserve
$F$ exactly, so that the lines $\beta=-1/2$ and $\beta=1/2$ carry the even,
respectively odd, ultraspherical case. This is Proposition~\ref{prop:quadratic}
below. The remaining, two-dimensional part of each strip is not covered by
these reductions.

\subsection*{Results}
Values of $F$ at $x=\pm1$ are always understood as one-sided limits from
$(-1,1)$; by \eqref{eq:Fsquare} these exist and are finite.

\begin{theorem}[Degree--parameter envelope]\label{thm:main}
For all $n\in\Nzero$, all $\alpha,\beta\ge-1/2$ and all $x\in[-1,1]$,
\begin{equation}\label{eq:main}
 F_{n,\alpha,\beta}(x)\le C_*\,\mathcal E_n(\alpha+\beta+1),
 \qquad C_*=1.2\cdot10^5.
\end{equation}
Equivalently,
\[
 F_{n,\alpha,\beta}(x)^2\le C_*^2
 \max\left\{1,S^{1/3},\frac{S^{1/2}}{(n+1)^{1/6}}\right\},
 \qquad S=\alpha+\beta+1.
\]
No attempt is made to optimise the numerical constant.
\end{theorem}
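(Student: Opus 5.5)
The plan is to cover the parameter region $\{\alpha,\beta\ge-1/2\}$ by the known region \eqref{eq:known} together with the two mixed strips and the two small rectangles, and to reduce everything else to Krasikov's quadrant estimate. By the symmetry $p_n^{(\alpha,\beta)}(-x)=\pm p_n^{(\beta,\alpha)}(x)$, which preserves $F$ and $S$, it suffices to treat $\alpha\ge\beta$. On $[A_0,\infty)^2$ with $n\ge6$, Krasikov's bound $F^2<3\alpha^{1/3}(1+\alpha/n)^{1/6}$ is at most a constant times $\Phi_n(S)$, because $\alpha\asymp S$ there and $(1+\alpha/n)^{1/6}\alpha^{1/3}\lesssim\max\{S^{1/3},S^{1/2}n^{-1/6}\}$. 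For $n\le5$ the envelope is $\asymp\max\{1,S^{1/2}\}$, and we would verify it by hand from the explicit hypergeometric form of $p_n$ together with the Gamma-function expression for the normalising constant. The bounded square $[-1/2,1/2]^2$ is handled by Christiansen--Rubin. The small rectangles, where both parameters are bounded, should follow either from the same Christiansen--Rubin argument or from the contiguous transport described next, with bounded constants.

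The main work is the strip $\beta\in[-1/2,A_0)$ with $\alpha$ large. Here I would transport the estimate from the parameter $\beta+1$, or $\beta+2$, which lies in Krasikov's range, down to $\beta$. The tool is a contiguous relation of the form
\[
(1+x)P_n^{(\alpha,\beta+1)}=c_1P_{n+1}^{(\alpha,\beta)}+c_2P_n^{(\alpha,\beta)},
\]
or its inverse $P_n^{(\alpha,\beta)}=a\,P_n^{(\alpha,\beta+1)}-b\,P_{n-1}^{(\alpha,\beta+1)}$. I would choose the combination so that, after multiplying by the weight factor $(1+x)^{\beta/2+1/4}$, the coefficients remain bounded as $x\to-1$; the singular endpoint terms cancel precisely because both polynomials on the right share the same endpoint behaviour. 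Rewriting in orthonormal form expresses $F_{n,\alpha,\beta}$ as a bounded combination of $F_{n,\alpha,\beta+1}$ and $F_{n-1,\alpha,\beta+1}$ on an interval $x\ge-1+c/(n+S)^2$. The main step is checking that the normalising ratios are $O(1)$ uniformly in $n$ and $\alpha$. Since $S$ changes by only $1$, the envelope changes by at most a constant factor.

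It remains to handle the hard-edge cap $x\in[-1,-1+c/(n+S)^2]$, where the transported coefficients are not controlled. There I would expand $P_n^{(\alpha,\beta)}$ as a hypergeometric series in $(1+x)/2$ about $-1$. Each term is dominated by a geometric-type series, since $(n)(n+S)(1+x)\lesssim1$, so that $|p_n(x)|\lesssim|p_n(-1)|$. Then $F^2\lesssim(1+x)^{\beta+1/2}\binom{n+\beta}{n}^2/h_n$, and Gamma asymptotics bound this by $\max\{1,S^{1/2}n^{-1/6}\ldots\}$. The large-$\alpha$ cap near $x=+1$ needs no separate treatment, because the transported estimate does not degenerate there.

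I expect the main obstacle to be the transport step. The difficulty is finding a contiguous combination whose coefficients, after weighting, are uniformly bounded in all three of $n$, $\alpha$ and $x$ simultaneously, and fitting the cap width so that it matches the region where the direct series bound holds.
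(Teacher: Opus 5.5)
Your architecture matches the paper's: Krasikov on the quadrant, a contiguous-relation transport in the strips, and a direct hypergeometric bound in an endpoint cap. However, the transport step as you state it fails, and it is the crux of the proof.

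\textbf{Why your transport fails.} The relation $P_n^{(\alpha,\beta)}=a\,P_n^{(\alpha,\beta+1)}+b\,P_{n-1}^{(\alpha,\beta+1)}$ has constant coefficients $a=\frac{n+S}{2n+S}$ and $b=\frac{n+\alpha}{2n+S}$. The weight for $\beta$ is $(1+x)^{-1/2}$ times the weight for $\beta+1$, so normalising can only give
\[
F_{n,\alpha,\beta}\le(1+x)^{-1/2}\bigl(a'F_{n,\alpha,\beta+1}+b'F_{n-1,\alpha,\beta+1}\bigr),\qquad a'^2=\frac{2(n+S)(n+\beta+1)}{(2n+S)(2n+S+1)},\quad b'^2=\frac{2n(n+\alpha)}{(2n+S)(2n+S-1)} .
\]
These normalised coefficients are $O(1)$, as you expect. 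In the strip, though, they are $\asymp\sqrt{n/(n+S)}$, not small. So the right-hand side is a bounded combination only where $1+x\gtrsim n/(n+S)$, which for $n\gtrsim S$ is just the bulk. On your range $1+x\ge c/(n+S)^2$ the factor $(1+x)^{-1/2}$ reaches $(n+S)/\sqrt c$.

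The endpoint cancellation you invoke cannot rescue this. At $x=-1$ the two right-hand terms have opposite signs and moduli of order $n/(\beta+1)$ times the left-hand side. The triangle inequality discards exactly that cancellation.

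\textbf{The missing idea.} You need to make the cancellation algebraic before taking absolute values. The second contiguous relation identifies the vanishing combination:
\[
P_n^{(a,b+1)}+\frac{n+b+1}{n}P_{n-1}^{(a,b+1)}=\frac{2n+a+b+1}{2n}(1+x)P_{n-1}^{(a,b+2)} .
\]
Substituting it gives
\[
P_n^{(a,b)}=\frac{n+a+b+1}{2n}(1+x)P_{n-1}^{(a,b+2)}-\frac{b+1}{n}P_{n-1}^{(a,b+1)} .
\]
Now the factor $(1+x)$ converts the weight of $(a,b+2)$ exactly into that of $(a,b)$. Only the second term carries $(1+x)^{-1/2}$, and its normalised coefficient is $\asymp(b+1)/\sqrt{n(n+a)}$. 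This yields $F_{n,a,b}\lesssim F_{n-1,a,b+2}+F_{n-1,a,b+1}$ precisely for $1+x\gtrsim1/(n(n+a))$. Two consequences follow.
\begin{enumerate}
\item The transport always involves $\beta+1$ and $\beta+2$ together. For $\beta<A_0-1$, where $\beta+1<A_0$, you therefore need a two-stage induction, first on the band $\beta\in[A_0-1,A_0]$.
\item The cap must have width $\asymp1/((n+1)(n+\alpha+1))$, not $c/(n+S)^2$. With your width, even the corrected coefficient times $(1+x)^{-1/2}$ is $\asymp\sqrt{S/n}$ at the cap boundary when $S\gg n$.
\end{enumerate}
Your series estimate does hold on the wider cap; your own condition $n(n+S)(1+x)\lesssim1$ allows it. There it gives $F^2\lesssim(2n+S)/\sqrt{(n+1)(n+\alpha+1)}\lesssim\Phi_n(S)$.

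\textbf{Two smaller gaps.} ``By hand'' for $n\le5$ on the quadrant is not an argument; the Haagerup--Schlichtkrull bound $F\lesssim(2n+S)^{1/4}$ settles it at once. Also, Christiansen--Rubin covers only $[-\tfrac12,\tfrac12]^2$. The rest of $[-\tfrac12,A_0]^2$ must also go through the corrected transport and the cap estimate.
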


\begin{corollary}[EMN conjecture]\label{cor:EMN}
Put $M(\alpha,\beta)=\max\{1,(\alpha+\beta+1)^{1/4}\}$. Then
\begin{equation}\label{eq:EMN-corollary}
 F_{n,\alpha,\beta}(x)\le C_*M(\alpha,\beta)
\end{equation}
uniformly in $n$, $\alpha$, $\beta$ and $x$. Consequently
\eqref{eq:EMNconj} holds.
\end{corollary}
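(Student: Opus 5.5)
The corollary follows from Theorem~\ref{thm:main} by comparing the envelope $\mathcal E_n(S)$ with $M(\alpha,\beta)$, and then comparing $M(\alpha,\beta)$ with the right-hand side of \eqref{eq:EMNconj}. The plan has three short steps.

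\emph{Step 1: $\mathcal E_n(S)\le M(\alpha,\beta)$.} By \eqref{eq:envelope-def}, $\mathcal E_n(S)^2=\Phi_n(S)$. We check the three entries of $\Phi_n(S)$ against $M(\alpha,\beta)^2=\max\{1,S^{1/2}\}$.
\begin{itemize}
\item The entry $1$ is trivially at most $\max\{1,S^{1/2}\}$.
\item For the entry $S^{1/3}$: if $S\le1$ then $S^{1/3}\le1$, and if $S\ge1$ then $S^{1/3}\le S^{1/2}$.
\item For the entry $S^{1/2}(n+1)^{-1/6}$: since $n+1\ge1$, it is at most $S^{1/2}$.
\end{itemize}
Note that $S=\alpha+\beta+1\ge0$ because $\alpha,\beta\ge-1/2$, so all powers are well defined. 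Hence $\Phi_n(S)\le M(\alpha,\beta)^2$. Taking square roots and combining with \eqref{eq:main} gives \eqref{eq:EMN-corollary} for every $x\in[-1,1]$, uniformly in $n$, $\alpha$ and $\beta$.

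\emph{Step 2: $M(\alpha,\beta)^2\le C\max\{1,(\alpha^2+\beta^2)^{1/4}\}$.} To deduce \eqref{eq:EMNconj} we square \eqref{eq:EMN-corollary}, which gives
\[
\max_{x}F_{n,\alpha,\beta}(x)^2\le C_*^2\max\{1,S^{1/2}\}.
\]
It remains to bound $S$ by $(\alpha^2+\beta^2)^{1/2}$ when $S$ is large. Since $\alpha+\beta\le|\alpha|+|\beta|\le\sqrt2\,(\alpha^2+\beta^2)^{1/2}$, we get
\[
S\le1+\sqrt2\,(\alpha^2+\beta^2)^{1/2}\le(1+\sqrt2)\max\{1,(\alpha^2+\beta^2)^{1/2}\}.
\]
Therefore
\[
\max\{1,S^{1/2}\}\le(1+\sqrt2)^{1/2}\max\{1,(\alpha^2+\beta^2)^{1/4}\}.
\]

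\emph{Step 3: conclusion.} Combining Steps 1 and 2 gives \eqref{eq:EMNconj} with an absolute implied constant of order $(1+\sqrt2)^{1/2}C_*^2$. There is no real obstacle: all the analytic difficulty lies in Theorem~\ref{thm:main}, and the corollary only uses monotonicity of powers together with $n+1\ge1$ and $S\ge0$.
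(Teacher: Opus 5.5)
Your proposal is correct and matches the paper's proof: bound each term of $\Phi_n(S)$ by $\max\{1,S^{1/2}\}$ (equivalently each term of $\mathcal E_n(S)$ by $\max\{1,S^{1/4}\}$), apply Theorem~\ref{thm:main}, then square and compare with $\max\{1,(\alpha^2+\beta^2)^{1/4}\}$. The only difference is that you prove the needed one-sided comparison directly, with constant $(1+\sqrt2)^{1/2}$, instead of citing Remark~\ref{rem:equivalence}, which gives $\sqrt3$.
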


\begin{proof}
Each of the three terms defining $\mathcal E_n(S)$ is at most
$\max\{1,S^{1/4}\}$, so \eqref{eq:EMN-corollary} follows from
Theorem~\ref{thm:main}. Squaring and using Remark~\ref{rem:equivalence} gives
\eqref{eq:EMNconj}.
\end{proof}

The next result records two rigorously sharp regimes. The hard-edge formula at
fixed degree forces the factor $S^{1/4}(n+1)^{-1/12}$, while a Bessel
turning-point sequence forces $S^{1/6}$. We do not claim here that the entire
three-regime envelope is pointwise optimal.

\begin{theorem}[Lower bounds]\label{thm:sharpness}
Write $x=\cos\theta$, $\theta\in[0,\pi]$.
\begin{enumerate}
\item[(i)] For every $n\ge0$,
\[
\begin{aligned}
F_{n,\frac12,-\frac12}(x)
 &=\sqrt{\tfrac2\pi}\,\bigl|\sin (n+\tfrac12)\theta\bigr|,\\
F_{n,-\frac12,\frac12}(x)
 &=\sqrt{\tfrac2\pi}\,\bigl|\cos (n+\tfrac12)\theta\bigr|,\\
F_{n,\frac12,\frac12}(x)
 &=\sqrt{\tfrac2\pi}\,\bigl|\sin (n+1)\theta\bigr|,
\end{aligned}
\]
while $F_{n,-\frac12,-\frac12}(x)=\sqrt{2/\pi}\,|\cos n\theta|$ for $n\ge1$ and
$F_{0,-\frac12,-\frac12}\equiv\pi^{-1/2}$. Apart from this degree-zero
exception, the maxima are $\sqrt{2/\pi}$. In particular every admissible
constant in either \eqref{eq:main} or \eqref{eq:EMN-corollary} is at least
$\sqrt{2/\pi}$.
\item[(ii)] For every fixed $n\in\Nzero$ and every $\alpha\ge-1/2$,
\begin{equation}\label{eq:sharp-exact}
F_{n,\alpha,-\frac12}(-1)^2
=\frac{(2n+\alpha+\tfrac12)\,\Gamma(n+\tfrac12)\,\Gamma(n+\alpha+\tfrac12)}
{\pi\,\Gamma(n+1)\,\Gamma(n+\alpha+1)},
\end{equation}
the right-hand side being read by continuous extension at
$(n,\alpha)=(0,-\frac12)$, where it equals $1/\pi$. Consequently
\begin{equation}\label{eq:sharp-asymptotic}
F_{n,\alpha,-\frac12}(-1)\sim
\Bigl(\frac{\Gamma(n+\tfrac12)}{\pi\,\Gamma(n+1)}\Bigr)^{1/2}\alpha^{1/4}
\qquad(\alpha\to\infty),
\end{equation}
so the fixed-degree exponent $1/4$ in $\mathcal E_n$ cannot be lowered.
\item[(iii)] For every fixed $A\ge-1/2$ and $z>0$,
\begin{equation}\label{eq:bessel-limit}
 \lim_{n\to\infty}F_{n,A,A}\!\left(\cos\frac zn\right)
 =\sqrt z\,|J_A(z)|.
\end{equation}
Moreover, there is an absolute constant $c>0$ such that, for every sufficiently
large $A$, there exists an integer $n_A$ with $n_A+1\ge(2A+1)^3$ for which
\begin{equation}\label{eq:bessel-lower}
 F_{n_A,A,A}\!\left(\cos\frac A{n_A}\right)\ge cA^{1/6}
\end{equation}
holds. Hence, along this sequence, $S=2A+1$, and the term $S^{1/6}$ in
$\mathcal E_n(S)$ cannot be omitted even though
$S^{1/4}(n+1)^{-1/12}\le1$.
\end{enumerate}
\end{theorem}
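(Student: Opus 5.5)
The three parts are independent, and I would treat them in order using only classical formulas for Jacobi polynomials.

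For (i), I would use the Chebyshev identities. Let $x=\cos\theta$. Then
\[
P_n^{(-1/2,-1/2)}\propto\cos n\theta,\qquad P_n^{(1/2,1/2)}\propto\frac{\sin(n+1)\theta}{\sin\theta},
\]
\[
P_n^{(1/2,-1/2)}\propto\frac{\sin(n+\frac12)\theta}{\sin\frac\theta2},\qquad P_n^{(-1/2,1/2)}\propto\frac{\cos(n+\frac12)\theta}{\cos\frac\theta2}.
\]
Substituting $1-x=2\sin^2\frac\theta2$ and $1+x=2\cos^2\frac\theta2$ into \eqref{eq:Fsquare}, the prefactor $(1-x)^{\alpha+1/2}(1+x)^{\beta+1/2}$ cancels the denominators exactly. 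For example, when $(\alpha,\beta)=(\frac12,-\frac12)$ it equals $2\sin^2\frac\theta2$. The normalising constants then follow from $\int_0^\pi\sin^2((n+\frac12)\theta)\,d\theta=\pi/2$ and its analogues. The only case that differs is $n=0$ with $\alpha=\beta=-\frac12$: there $p_0=\pi^{-1/2}$, because $\int_{-1}^1(1-x^2)^{-1/2}\,dx=\pi$. In every other case the maximum of $|\sin|$ or $|\cos|$ equals $1$ and is attained in $[0,\pi]$, so the maximum of $F$ is $\sqrt{2/\pi}$.

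For (ii), I would start from the endpoint value
\[
P_n^{(\alpha,\beta)}(-1)=(-1)^n\binom{n+\beta}{n}
\]
and the norm
\[
h_n=\frac{2^{\alpha+\beta+1}\Gamma(n+\alpha+1)\Gamma(n+\beta+1)}{(2n+\alpha+\beta+1)\,n!\,\Gamma(n+\alpha+\beta+1)}.
\]
Set $\beta=-\frac12$. Then $(1+x)^{\beta+1/2}=1$, and the limit of $(1-x)^{\alpha+1/2}$ as $x\to-1$ is $2^{\alpha+1/2}$. The factor $F^2$ therefore equals $2^{\alpha+1/2}P_n(-1)^2/h_n$. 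We have $\binom{n-1/2}{n}=\Gamma(n+\frac12)/(\Gamma(\frac12)\,n!)$. Cancelling the powers of $2$ and one factor of $\Gamma(n+\frac12)$ gives \eqref{eq:sharp-exact}. At $n=0$ the expression $(\alpha+\frac12)\Gamma(\alpha+\frac12)$ becomes $\Gamma(\alpha+\frac32)$, which gives the continuous extension $1/\pi$. The asymptotic \eqref{eq:sharp-asymptotic} then follows from $\Gamma(n+\alpha+\frac12)/\Gamma(n+\alpha+1)\sim\alpha^{-1/2}$, since the remaining factor $(2n+\alpha+\frac12)$ is asymptotic to $\alpha$.

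For (iii), the first claim \eqref{eq:bessel-limit} is the Mehler--Heine formula
\[
n^{-A}P_n^{(A,B)}(\cos(z/n))\to(z/2)^{-A}J_A(z),
\]
applied locally uniformly in $z$. The orthonormalisation contributes $h_n^{-1/2}\sim(2^{2A}/n)^{-1/2}$. The weight contributes $(1-\cos(z/n))^{A+1/2}(1+\cos(z/n))^{A+1/2}\sim(z^2/n^2)^{A+1/2}$. Multiplying these, the powers of $n$ and of $2$ cancel, and what remains is $z\,J_A(z)^2$.

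For the lower bound \eqref{eq:bessel-lower}, I would take $z=A$. This sits at the turning point of $J_A$, where the classical asymptotic $J_A(A)\sim\Gamma(\frac13)/(2^{2/3}3^{1/6}\pi A^{1/3})$ gives $\sqrt A\,J_A(A)\asymp A^{1/6}$. So for each large $A$ the limit in \eqref{eq:bessel-limit} at $z=A$ is at least $2cA^{1/6}$. By the definition of the limit there is some $N(A)$ beyond which $F_{n,A,A}(\cos(A/n))\ge cA^{1/6}$. Taking $n_A\ge\max\{N(A),(2A+1)^3\}$ then gives both requirements. With $n_A+1\ge S^3$ we get $S^{1/2}(n_A+1)^{-1/6}\le1$, so the envelope $\mathcal E_n(S)$ reduces to $\max\{1,S^{1/6}\}$, which proves the final assertion.

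The main difficulty is to make the Airy-regime value $J_A(A)\asymp A^{-1/3}$ rigorous with an absolute constant. I would take this from the standard Nicholson/Debye transitional asymptotics (Watson, \S8.2). I would not try to make Mehler--Heine uniform in $A$: for each fixed $A$ the ordinary limit is enough, because $n_A$ is chosen afterwards.
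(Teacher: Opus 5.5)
Your proposal is correct and follows essentially the same route as the paper. The ingredients match: Chebyshev-type identities with orthogonality for (i); the endpoint value and the norm \eqref{eq:norm} for (ii); and for (iii), Mehler--Heine with $H_n^{A,A}\sim 2^{2A}/n$, followed by the turning-point asymptotic $J_A(A)\asymp A^{-1/3}$, with $n_A$ chosen after $A$. Your Watson constant $\Gamma(\frac13)/(2^{2/3}3^{1/6}\pi)$ equals the paper's $2^{1/3}\operatorname{Ai}(0)$.

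The differences are minor. You treat $(-\frac12,\frac12)$ directly rather than by reflection. You should also add the one-line observation behind the ``in particular'' clause of (i): for $(\pm\frac12,\mp\frac12)$, and for $(-\frac12,-\frac12)$ with $n\ge1$, one has $S\le1$, so $\mathcal E_n(S)=M(\alpha,\beta)=1$. That is what turns the computed maxima into the lower bound $\sqrt{2/\pi}$ on admissible constants.
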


An application of Theorem~\ref{thm:main} lies close to the setting in which the
EMN conjecture was posed. For $n\in\Nzero$ let
\[
\begin{aligned}
\lambda_n^{(\alpha,\beta)}(x)
&=\min\Bigl\{\int_{-1}^1|Q|^2w_{\alpha,\beta}\dd t:
  \deg Q\le n,\ Q(x)=1\Bigr\}\\
&=\Bigl(\sum_{k=0}^n p_k^{(\alpha,\beta)}(x)^2\Bigr)^{-1}
\end{aligned}
\]
denote the Christoffel function of the weight $w_{\alpha,\beta}$.

\begin{theorem}[Uniform Christoffel bound]\label{thm:christoffel}
For all $n\in\Nzero$, all
$\alpha,\beta\ge-1/2$ and all $x\in(-1,1)$,
\begin{equation}\label{eq:christoffel}
\lambda_n^{(\alpha,\beta)}(x)
\ \ge\ \frac{w_{\alpha,\beta}(x)\sqrt{1-x^2}}
{3C_*^2\,(n+1)\,\Phi_n(\alpha+\beta+1)} .
\end{equation}
In particular the Gauss--Jacobi quadrature weights for
$w_{\alpha,\beta}$ obey the same lower bound at their nodes.
\end{theorem}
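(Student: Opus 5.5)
The bound follows from Theorem~\ref{thm:main} applied termwise to the Christoffel sum, together with monotonicity of the envelope in the degree. Fix $x\in(-1,1)$. By \eqref{eq:Fsquare} and Theorem~\ref{thm:main}, for every $k\le n$,
\[
 p_k^{(\alpha,\beta)}(x)^2
 =\frac{F_{k,\alpha,\beta}(x)^2}{w_{\alpha,\beta}(x)\sqrt{1-x^2}}
 \le \frac{C_*^2\,\Phi_k(S)}{w_{\alpha,\beta}(x)\sqrt{1-x^2}},
 \qquad S=\alpha+\beta+1 .
\]
Summing over $k=0,\dots,n$ gives
\[
 \lambda_n^{(\alpha,\beta)}(x)^{-1}\le \frac{C_*^2}{w_{\alpha,\beta}(x)\sqrt{1-x^2}}\sum_{k=0}^n\Phi_k(S).
\]
It remains to bound $\sum_{k=0}^n\Phi_k(S)$ by $3(n+1)\Phi_n(S)$.

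For that estimate I will use the crude inequality $\max\{a,b,c\}\le a+b+c$. This gives
\[
 \sum_{k=0}^n\Phi_k(S)\le (n+1)+(n+1)S^{1/3}+S^{1/2}\sum_{k=0}^n(k+1)^{-1/6}.
\]
I will then compare the last sum with an integral:
\[
 \sum_{j=1}^{n+1}j^{-1/6}\le\int_0^{n+1}t^{-1/6}\,dt=\tfrac65(n+1)^{5/6}.
\]
This bounds the last term by $\tfrac65(n+1)\cdot S^{1/2}(n+1)^{-1/6}$. Each of the three terms is at most $(n+1)\Phi_n(S)$, up to the factor $6/5$ on the third, so the total is at most $(1+1+\tfrac65)(n+1)\Phi_n(S)<3.2\,(n+1)\Phi_n(S)$.

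The target constant is $3$, so this bookkeeping must be tightened slightly. I would avoid summing the three pieces and bound each term by the max directly. For $k\le n$ one has $\Phi_k(S)\le \max\{1,S^{1/3},S^{1/2}(k+1)^{-1/6}\}$, and I would split according to which term of $\Phi_n(S)$ dominates. If $S^{1/2}(n+1)^{-1/6}\le\max\{1,S^{1/3}\}$, then each $\Phi_k(S)\le \max\{1,S^{1/3}\}+S^{1/2}(k+1)^{-1/6}$, and the sum is at most $(n+1)\Phi_n+\tfrac65(n+1)^{5/6}S^{1/2}\le (1+\tfrac65)(n+1)\Phi_n$. In the other case the same bound $\Phi_k\le \max\{1,S^{1/3}\}+S^{1/2}(k+1)^{-1/6}$ gives at most $(1+\tfrac65)(n+1)\Phi_n$ as well. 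Thus $\sum_{k\le n}\Phi_k(S)\le \tfrac{11}{5}(n+1)\Phi_n(S)\le 3(n+1)\Phi_n(S)$, and inverting yields \eqref{eq:christoffel}.

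For the quadrature statement, I will recall that the Gauss--Jacobi weight at a node $x_j$ of $p_{n+1}^{(\alpha,\beta)}$ equals $\lambda_n^{(\alpha,\beta)}(x_j)$. Since all nodes lie in $(-1,1)$, the bound applies directly at the nodes.

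The only real content is Theorem~\ref{thm:main}. The remaining step that needs any care is the summation of $(k+1)^{-1/6}$, and the integral comparison handles it. No step here is a serious obstacle.
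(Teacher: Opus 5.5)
Your proposal is correct and follows the paper's proof essentially verbatim. It applies Theorem~\ref{thm:main} termwise via \eqref{eq:Fsquare}, bounds $\Phi_k(S)\le\max\{1,S^{1/3}\}+S^{1/2}(k+1)^{-1/6}$, and uses the integral comparison $\sum_{j=1}^{n+1}j^{-1/6}\le\frac65(n+1)^{5/6}$ to get $\sum_{k\le n}\Phi_k(S)\le\frac{11}{5}(n+1)\Phi_n(S)<3(n+1)\Phi_n(S)$, and then identifies the Gauss--Jacobi weights with $\lambda_n^{(\alpha,\beta)}$ at the zeros of $p_{n+1}^{(\alpha,\beta)}$. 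The only superfluous part is the case split in your tightening step, since that bound on $\Phi_k(S)$ holds for every $k$ whichever term of $\Phi_n(S)$ dominates.
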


For bounded parameters this has the classical bulk order
$w_{\alpha,\beta}(x)\sqrt{1-x^2}/n$ (compare \cite{NevaiCase}). In contrast to
the EMN corollary, the factor in \eqref{eq:christoffel} retains the improvement
when the degree is large relative to the parameters.

\subsection*{Method}
The estimate we import is Krasikov's, on the quadrant $\alpha,\beta\ge A_0$; what
has to be constructed is a transport mechanism carrying that estimate down to the
hard edge without losing the weight. Two contiguous relations, taken separately,
each fail: one raises $\beta$ but changes the polynomial's endpoint behaviour, the
other restores the endpoint behaviour but reintroduces a factor $(1+x)^{-1/2}$.
Combined in the right proportion (Lemma~\ref{lem:cancellation}) the leading
endpoint singularities cancel, and what survives is an inequality
\[
F_{n,\alpha,\beta}\ \le\ A\,F_{n-1,\alpha,\beta+2}
+B\,(1+x)^{-1/2}F_{n-1,\alpha,\beta+1}
\]
in which $B$ carries a compensating factor $n^{-1}$. The residual singularity is
then harmless provided one never uses the inequality too close to $x=-1$; and
close to $x=-1$ one does not need it, because there the hypergeometric series at
the endpoint converges so fast that a direct estimate applies
(Lemma~\ref{lem:mixed-cap}). Matching the two regimes at the scale
$1+x\asymp[(n+1)(n+\alpha+1)]^{-1}$ is what makes the argument uniform in the
degree and preserves the full scale $\mathcal E_n(S)$. Two applications of the
mechanism carry any $\beta\in[-\frac12,A_0]$ into
$[A_0,A_0+2]$, and a two-sided version handles the low-parameter rectangle. Only
finitely many shifts occur, so every constant remains absolute; \S\ref{sec:proof}
collects them.

\subsection*{Organisation}
\S\ref{sec:normalisation} fixes normalisations and the envelope calculus.
\S\ref{sec:highhigh} records the high-parameter input.
\S\ref{sec:elementary} contains two elementary estimates
and \S\ref{sec:caps} the endpoint estimates. \S\ref{sec:transitions} constructs
the transport mechanism, \S\ref{sec:mixed} applies it in the mixed strips, and
\S\ref{sec:proof} completes the proof of Theorem~\ref{thm:main}.
\S\ref{sec:sharpness} proves Theorem~\ref{thm:sharpness} and the quadratic
transformation identity; \S\ref{sec:christoffel} proves
Theorem~\ref{thm:christoffel}; \S\ref{sec:open} states the remaining
sharp-constant problem.

\subsection*{Acknowledgement of a related manuscript}
Estimates of the present kind enter the analysis of the discrete Laguerre
operator carried out by Koornwinder, Kostenko and Teschl \cite{KKT}, who noted in
\cite[Rem.~6.4(ii)]{KKT} that \eqref{eq:EMNconj} would make one of their kernel
estimates unconditional. That circle of ideas is developed in the companion
manuscript \cite{LiKKT}. Nothing in the present paper depends on \cite{LiKKT};
the normalisation interface between the two is recorded in
Remark~\ref{rem:companion}.

\section{Normalisation and envelope calculus}\label{sec:normalisation}

For $\alpha,\beta>-1$ put
\begin{equation}\label{eq:norm}
H_n^{\alpha,\beta}
=\int_{-1}^1\bigl(P_n^{(\alpha,\beta)}\bigr)^2w_{\alpha,\beta}\dd x
=\frac{2^{\alpha+\beta+1}}{2n+\alpha+\beta+1}\cdot
\frac{\Gamma(n+\alpha+1)\Gamma(n+\beta+1)}{\Gamma(n+1)\Gamma(n+\alpha+\beta+1)},
\end{equation}
so that $p_n^{(\alpha,\beta)}=P_n^{(\alpha,\beta)}/\sqrt{H_n^{\alpha,\beta}}$. For
$n=0$ the right-hand side of \eqref{eq:norm} is indeterminate exactly when
$\alpha+\beta+1=0$, and we then use the beta integral
\begin{equation}\label{eq:norm0}
H_0^{\alpha,\beta}=2^{\alpha+\beta+1}B(\alpha+1,\beta+1),
\end{equation}
which is valid for all $\alpha,\beta>-1$ and agrees with \eqref{eq:norm} whenever
the latter is defined. All degree-zero computations below use
\eqref{eq:norm0}.

The function $F_{n,\alpha,\beta}$ was defined in \eqref{eq:Fdef}; the equivalent
form \eqref{eq:Fsquare} is the one to use at the endpoints, where \eqref{eq:Fdef}
may present a formal product $0\cdot\infty$ while \eqref{eq:Fsquare} has a finite
one-sided limit. From $P_n^{(\alpha,\beta)}(-x)=(-1)^nP_n^{(\beta,\alpha)}(x)$ and
$H_n^{\alpha,\beta}=H_n^{\beta,\alpha}$ we get the reflection identity
\begin{equation}\label{eq:reflection}
F_{n,\alpha,\beta}(x)=F_{n,\beta,\alpha}(-x),
\end{equation}
used whenever a statement about one endpoint or one mixed strip is transferred to
its mirror image.

\begin{remark}[Equivalent form of Krasikov's scale]\label{rem:krasikov-equivalence}
For $S\ge0$ and $n\in\Nzero$,
\begin{equation}\label{eq:Phi-equivalence}
 \begin{aligned}
 \Phi_n(S)&\le
 \max\left\{1,S^{1/3}\left(1+\frac{S}{n+1}\right)^{1/6}\right\}\\
 &\le2^{1/6}\Phi_n(S).
 \end{aligned}
\end{equation}
For $S=0$ the assertion is immediate. For $S>0$, after factoring out
$S^{1/3}$ it is the elementary inequality
\[
 \max\{1,t^{1/6}\}\le(1+t)^{1/6}
 \le2^{1/6}\max\{1,t^{1/6}\}.
\]
For $n\ge1$, reflect so that
$a=\alpha\ge\beta$. If $a\ge1$, then
$a\le S\le3a$ and $n\le n+1\le2n$; hence the middle expression in
\eqref{eq:Phi-equivalence} is comparable, with absolute constants, to the
right-hand side of \eqref{eq:krasikov-conjecture}. If $a<1$, both expressions,
including their outer maximum with $1$, remain between positive absolute
constants. This proves the asserted equivalence with Krasikov's formulation.
\end{remark}

\begin{remark}\label{rem:equivalence}
For $\alpha,\beta\ge-1/2$,
\[
\begin{aligned}
2^{-1/4}\max\{1,(\alpha^2+\beta^2)^{1/4}\}
&\le\max\{1,(\alpha+\beta+1)^{1/2}\}\\
&\le\sqrt3\,\max\{1,(\alpha^2+\beta^2)^{1/4}\}.
\end{aligned}
\]
Indeed, let $m=\max(\alpha,\beta)$. If $m\ge1$ then
$m\le\alpha+\beta+1\le2m+1\le3m$ while $m\le\sqrt{\alpha^2+\beta^2}\le\sqrt2\,m$,
which gives both inequalities. If $m<1$ then $\max\{1,(\alpha+\beta+1)^{1/2}\}$ and
$\max\{1,(\alpha^2+\beta^2)^{1/4}\}$ both lie in $[1,\sqrt3)$, so the left-hand
side is at most $2^{-1/4}\cdot2^{1/4}=1$ and the right-hand side at least
$\sqrt3$. This is the comparison used in Corollary~\ref{cor:EMN} to recover the
usual formulation \eqref{eq:EMNconj}.
\end{remark}

\begin{lemma}[Stability under the parameter shifts]\label{lem:shift}
Let $s_0=A_0+\tfrac12$. If $n\ge1$, $S\ge s_0$ and $j\in\{1,2\}$, then
\begin{equation}\label{eq:shift}
 \mathcal E_{n-1}(S+j)\le1.4\,\mathcal E_n(S).
\end{equation}
\end{lemma}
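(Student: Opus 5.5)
Since $\mathcal E=\Phi^{1/2}$, it suffices to prove $\Phi_{n-1}(S+j)\le 1.96\,\Phi_n(S)$. We compare the three terms of $\Phi_{n-1}(S+j)$ one at a time with $\Phi_n(S)$; the degree now enters as $n$ in place of $n+1$.

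The constant term is trivial: $1\le\Phi_n(S)$.

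For the middle term, use $S\ge s_0=A_0+\tfrac12\approx1.1036$ to absorb the shift:
\[
\frac{S+j}{S}\le 1+\frac{2}{s_0}\le 2.813 .
\]
Hence
\[
(S+j)^{1/3}\le 2.813^{1/3}S^{1/3}\approx1.412\,S^{1/3}\le 1.96\,\Phi_n(S).
\]

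For the third term, combine the same factor with $(n+1)/n\le 2$ for $n\ge1$:
\[
\frac{(S+j)^{1/2}}{n^{1/6}}
=\Bigl(\frac{S+j}{S}\Bigr)^{1/2}\Bigl(\frac{n+1}{n}\Bigr)^{1/6}\frac{S^{1/2}}{(n+1)^{1/6}}
\le 2.813^{1/2}\,2^{1/6}\,\frac{S^{1/2}}{(n+1)^{1/6}} .
\]
Numerically $2.813^{1/2}\approx1.677$ and $2^{1/6}\approx1.1225$, so the factor is about $1.883<1.96$. Therefore this term is at most $1.96\,\Phi_n(S)$.

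Taking the maximum of the three bounds gives $\Phi_{n-1}(S+j)\le1.96\,\Phi_n(S)$, and taking square roots yields $\mathcal E_{n-1}(S+j)\le1.4\,\mathcal E_n(S)$.

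The only delicate point is the numerics. The third term is the binding one, with factor about $1.883$ against the allowed $1.96$. The margin depends on using the exact value of $A_0$ together with the worst case $j=2$, $n=1$, so I would verify $(1+2/s_0)\,2^{1/3}<1.96^2$ explicitly rather than rely on the rounded decimals above.
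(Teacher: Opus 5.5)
Your proof is correct and is essentially the paper's argument: the same term-by-term comparison, using $(S+j)/S\le 1+2/s_0$ and $(n+1)/n\le 2$. The only difference is that the paper works directly with $\mathcal E$ (exponents $1/6$, $1/4$, $1/12$), and its binding factor $(1+2/s_0)^{1/4}2^{1/12}<1.372$ is just the square root of your $1.883$. The numerics are also less delicate than you suggest, since $(1+2/s_0)\,2^{1/3}\approx3.54$ is comfortably below $1.96^2\approx3.84$.
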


\begin{proof}
The constant term causes no loss. For the second term,
\[
 (S+j)^{1/6}\le(1+2/s_0)^{1/6}S^{1/6}<1.2S^{1/6}.
\]
For the third term, comparison with the corresponding term of
$\mathcal E_n(S)$ gives the ratio
\[
 \left(1+\frac jS\right)^{1/4}\left(\frac{n+1}{n}\right)^{1/12}
 \le(1+2/s_0)^{1/4}2^{1/12}<1.372<1.4.
\]
Taking the maximum proves \eqref{eq:shift}.
\end{proof}

\section{The high-parameter quadrant}\label{sec:highhigh}

\begin{proposition}[Krasikov]\label{prop:krasikov}
For $n\ge6$ and $\alpha\ge\beta\ge A_0$,
\begin{equation}\label{eq:krasikov}
\max_{-1\le x\le1}
(1-x)^{\alpha+1/2}(1+x)^{\beta+1/2}\bigl|p_n^{(\alpha,\beta)}(x)\bigr|^2
<3\alpha^{1/3}\Bigl(1+\frac\alpha n\Bigr)^{1/6}.
\end{equation}
\end{proposition}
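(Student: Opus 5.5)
This proposition is imported, not proved here: it is Krasikov's Theorem~2 in \cite{KrasikovUpper}. The plan is therefore to check that his statement says exactly \eqref{eq:krasikov} in our normalisation, and then to sketch how his argument runs.

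\emph{Step 1: matching normalisations.} Krasikov works with the orthonormal Jacobi polynomials for the weight $(1-x)^\alpha(1+x)^\beta$. I would confirm that his damping factor is $(1-x)^{\alpha+1/2}(1+x)^{\beta+1/2}$, which by \eqref{eq:Fsquare} is $F_{n,\alpha,\beta}^2$. I would also confirm that his hypothesis $n\ge6$, $\alpha\ge\beta\ge A_0=(1+\sqrt2)/4$ is stated in the same orientation as here. If it is stated for $\beta\ge\alpha$ instead, the reflection identity \eqref{eq:reflection} converts it.

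\emph{Step 2: the mechanism of his proof.} Put $y=w_{\alpha,\beta}^{1/2}(1-x^2)^{1/2}p_n$. This function satisfies a second-order equation $y''+Q(x)y=0$ with no first-order term. The condition $\alpha,\beta\ge A_0$ is what makes the resulting Sonin/Laguerre-type functional
\[
 \Psi=(1-x^2)^{1/2}\bigl(\cdots\bigr)y^2+(1-x^2)^{3/2}(\cdots)(y')^2
\]
majorise $F^2$ and be monotone outside a neighbourhood of the turning points. Its maximum is thus attained near the largest zero of $p_n$, i.e. near the transition point $x_0$ where $Q$ vanishes. There one uses an Airy-type local bound, $|y|\lesssim Q'(x_0)^{-1/6}$ times the $L^2$ mass. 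A computation of $Q'(x_0)$ together with the position $1-x_0\asymp(\alpha^2/(n+\alpha)^2)$ then produces the factor $\alpha^{1/3}(1+\alpha/n)^{1/6}$. The constant $3$ and the restriction $n\ge6$ come out of his explicit bookkeeping.

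\emph{Main obstacle.} Within this paper the only real task is the normalisation check in Step~1. Since the weight exponents $\alpha+1/2$ and $\beta+1/2$ appear exactly as in \eqref{eq:Fsquare}, I expect no conversion constant to enter. Reproving the Airy-scale estimate would be the hard part, and we do not attempt it; we cite it.
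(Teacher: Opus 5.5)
Your proposal matches the paper, which also treats this proposition as a direct citation of Krasikov's Theorem~2 in \cite{KrasikovUpper}. Like you, the paper only checks that his orthonormal normalisation, his weight exponents $\alpha+\tfrac12,\beta+\tfrac12$ and his orientation $\alpha\ge\beta$ agree with \eqref{eq:norm} and \eqref{eq:Fsquare}; it adds that the maximum is over the closed interval, with no limiting convention needed since $\alpha,\beta>0$. Your Step~2 is not required, and it is only a heuristic guess at the mechanism (Krasikov in fact reaches \eqref{eq:krasikov} as a corollary of a Sonin-type bound between approximations to the extreme zeros), so no part of the argument should rest on it.
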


This is \cite[Thm.~2]{KrasikovUpper}, stated there for the orthonormal
polynomials, whose squared norm is \eqref{eq:norm}; the maximum is over the
closed interval, and no limiting convention is needed since $\alpha,\beta>0$.

\begin{proposition}[Haagerup--Schlichtkrull]\label{prop:HS}
For $n\in\Nzero$, $\alpha,\beta\ge0$ and $x\in[-1,1]$ put
\begin{equation}\label{eq:gdef}
g_n^{(\alpha,\beta)}(x)
=\Bigl(\frac{\Gamma(n+1)\Gamma(n+\alpha+\beta+1)}
{\Gamma(n+\alpha+1)\Gamma(n+\beta+1)}\Bigr)^{1/2}
\Bigl(\frac{1-x}2\Bigr)^{\alpha/2}\Bigl(\frac{1+x}2\Bigr)^{\beta/2}
P_n^{(\alpha,\beta)}(x).
\end{equation}
Then
\[
(1-x^2)^{1/4}\bigl|g_n^{(\alpha,\beta)}(x)\bigr|
\le C_{\mathrm{HS}}\,(2n+\alpha+\beta+1)^{-1/4},
\qquad C_{\mathrm{HS}}<12 .
\]
Moreover
\begin{equation}\label{eq:bridge}
F_{n,\alpha,\beta}(x)
=\Bigl(\frac{2n+\alpha+\beta+1}2\Bigr)^{1/2}(1-x^2)^{1/4}
\bigl|g_n^{(\alpha,\beta)}(x)\bigr|
\qquad(n\in\Nzero),
\end{equation}
so that
\begin{equation}\label{eq:HSorthonormal}
F_{n,\alpha,\beta}(x)\le 2^{-1/2}C_{\mathrm{HS}}\,(2n+\alpha+\beta+1)^{1/4}.
\end{equation}
\end{proposition}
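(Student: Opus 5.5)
The statement is the Haagerup--Schlichtkrull proposition. Its inequality for $g_n^{(\alpha,\beta)}$ is imported from \cite{HaagerupSchlichtkrull}, with the constant $C_{\mathrm{HS}}<12$ quoted from there. What has to be proved here is the bridge identity \eqref{eq:bridge}. The bound \eqref{eq:HSorthonormal} then follows by substituting the imported inequality.

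For \eqref{eq:bridge} I would work with squares, so both sides are nonnegative. By \eqref{eq:Fsquare} and $p_n=P_n/\sqrt{H_n^{\alpha,\beta}}$,
\[
F^2=(1-x^2)^{1/2}\,(1-x)^\alpha(1+x)^\beta\,\frac{P_n^{(\alpha,\beta)}(x)^2}{H_n^{\alpha,\beta}}.
\]
From \eqref{eq:gdef},
\[
(1-x^2)^{1/2}g_n^2=(1-x^2)^{1/2}\,2^{-\alpha-\beta}(1-x)^\alpha(1+x)^\beta P_n^2\cdot
\frac{\Gamma(n+1)\Gamma(n+\alpha+\beta+1)}{\Gamma(n+\alpha+1)\Gamma(n+\beta+1)}.
\]
Dividing, the common factors cancel. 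What remains is the claim that $1/H_n^{\alpha,\beta}$ equals
\[
\frac{2n+\alpha+\beta+1}{2}\,2^{-\alpha-\beta}\,\frac{\Gamma(n+1)\Gamma(n+\alpha+\beta+1)}{\Gamma(n+\alpha+1)\Gamma(n+\beta+1)}.
\]
This is exactly the reciprocal of \eqref{eq:norm}, since $2^{-\alpha-\beta}/2=2^{-(\alpha+\beta+1)}$. Taking square roots gives \eqref{eq:bridge} on $(-1,1)$. At $x=\pm1$ it holds by the one-sided-limit convention, because both sides are continuous there when $\alpha,\beta\ge0$.

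The only delicate point is $n=0$. Since $\alpha,\beta\ge0$, we have $S=\alpha+\beta+1\ge1>0$, so \eqref{eq:norm} is defined and agrees with \eqref{eq:norm0}. The computation therefore also covers $n=0$; the same fact makes $\Gamma(n+\alpha+\beta+1)$ finite in \eqref{eq:gdef}.

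Finally, multiplying the Haagerup--Schlichtkrull bound by $\bigl((2n+\alpha+\beta+1)/2\bigr)^{1/2}$ gives
\[
F\le 2^{-1/2}C_{\mathrm{HS}}(2n+\alpha+\beta+1)^{1/2-1/4},
\]
which is \eqref{eq:HSorthonormal}. The main risk is not the algebra. It is making sure the normalisation of $g_n$ in \cite{HaagerupSchlichtkrull} matches \eqref{eq:gdef}, including the powers of $2$ from $((1\mp x)/2)^{\alpha/2}$, $((1\mp x)/2)^{\beta/2}$. I would check this against their statement before quoting the constant.
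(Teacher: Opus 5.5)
Your proposal is correct and follows the paper's proof. The inequality is quoted from Haagerup--Schlichtkrull's Theorem~1.1, and \eqref{eq:bridge} comes from cancelling the gamma quotients against \eqref{eq:norm}, leaving the factor $(2n+\alpha+\beta+1)/2$; the case $n=0$ is covered because $\alpha+\beta+1\ge1$ makes \eqref{eq:norm} agree with \eqref{eq:norm0}. Working with squares rather than solving \eqref{eq:gdef} for the weighted polynomial, as the paper does, is only a cosmetic difference.
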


\begin{proof}
The inequality, with the value $C_{\mathrm{HS}}<12$, is
\cite[Thm.~1.1]{HaagerupSchlichtkrull}. For \eqref{eq:bridge},
solve \eqref{eq:gdef} for
$(1-x)^{\alpha/2}(1+x)^{\beta/2}P_n^{(\alpha,\beta)}$ and divide by
$\sqrt{H_n^{\alpha,\beta}}$: the gamma quotients cancel and
$2^{(\alpha+\beta)/2}/\sqrt{2^{\alpha+\beta+1}/(2n+\alpha+\beta+1)}$ remains. For
$n=0$ the same computation applies with \eqref{eq:norm0}, since $\alpha,\beta\ge0$.
\end{proof}

The exponent in \eqref{eq:HSorthonormal} is positive, so this estimate cannot
replace \eqref{eq:krasikov} in large degree; we use it only for the six degrees
$0\le n\le5$.

\begin{theorem}\label{thm:highhigh}
For all $n\in\Nzero$, all $\alpha,\beta\ge A_0$ and all $x\in[-1,1]$,
\begin{equation}\label{eq:HH}
 F_{n,\alpha,\beta}(x)\le16\,\mathcal E_n(\alpha+\beta+1).
\end{equation}
\end{theorem}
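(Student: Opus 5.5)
By the reflection identity \eqref{eq:reflection} I may assume $\alpha\ge\beta\ge A_0$; both $F$ and $S=\alpha+\beta+1$ are symmetric in $(\alpha,\beta)$, so nothing is lost. The proof then splits into two cases according to whether $n\ge6$ or $n\le5$.

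\emph{Case $n\ge6$.} Here I apply Proposition~\ref{prop:krasikov} directly, which gives $F^2<3\alpha^{1/3}(1+\alpha/n)^{1/6}$. It remains to compare this with $\Phi_n(S)$. Since $\beta\ge A_0>0$, we have $\alpha\le S$, and $n\ge(n+1)/2$ gives $\alpha/n\le 2S/(n+1)$. Hence
\[
3\alpha^{1/3}\Bigl(1+\frac\alpha n\Bigr)^{1/6}
\le 3\,S^{1/3}\Bigl(1+\frac{2S}{n+1}\Bigr)^{1/6}
\le 3\cdot2^{1/6}\,S^{1/3}\Bigl(1+\frac{S}{n+1}\Bigr)^{1/6}.
\]
By Remark~\ref{rem:krasikov-equivalence}, the right-hand side is at most $3\cdot2^{1/3}\Phi_n(S)$. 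Taking square roots gives $F\le(3\cdot2^{1/3})^{1/2}\mathcal E_n(S)<2\,\mathcal E_n(S)$, which is well within the constant $16$.

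\emph{Case $0\le n\le5$.} Here I use \eqref{eq:HSorthonormal}, valid because $\alpha,\beta\ge0$:
\[
F\le 2^{-1/2}C_{\mathrm{HS}}(2n+S)^{1/4}<\tfrac{12}{\sqrt2}(10+S)^{1/4}.
\]
I compare this with the third term of $\mathcal E_n(S)$, namely $S^{1/4}(n+1)^{-1/12}\ge 6^{-1/12}S^{1/4}$, and split according to the size of $S$.
\begin{itemize}
\item If $S\ge10$, then $(10+S)^{1/4}\le 2^{1/4}S^{1/4}\le 2^{1/4}6^{1/12}\,\mathcal E_n(S)$, so
\[
F\le\tfrac{12}{\sqrt2}\,2^{1/4}6^{1/12}\,\mathcal E_n(S)\approx 8.49\cdot1.19\cdot1.16\,\mathcal E_n(S)\approx11.7\,\mathcal E_n(S).
\]
\item If $S<10$, then $(10+S)^{1/4}<20^{1/4}\approx2.115$ and $\mathcal E_n(S)\ge1$, so $F<8.49\cdot2.115\approx17.96$.
\end{itemize}

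The second subcase exceeds $16$, and this bookkeeping is the only real obstacle. Note that \eqref{eq:HSorthonormal} only gives $C_{\mathrm{HS}}<12$ as an upper bound, so the loss cannot be absorbed by a sharper value of $C_{\mathrm{HS}}$ from the citation. Instead I would refine the split by using the full envelope rather than only the constant $1$. For moderate $S$ one has $\mathcal E_n(S)\ge\max\{1,S^{1/12}\}$ via $\Phi_n\ge S^{1/3}$, together with the third term. I would first choose the threshold to balance $(10+S)^{1/4}$ against $\max\{1,S^{1/6},6^{-1/12}S^{1/4}\}$.

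If the arithmetic does not close by this balancing, the fallback is direct evaluation. For $n\le5$ the polynomials are explicit and have degree at most $5$, so for bounded $S$ and $\alpha,\beta\ge A_0$ one can bound $F$ directly by a crude estimate: bound $|P_n|$ by its endpoint value (valid since $\max\{\alpha,\beta\}\ge-1/2$), then control $H_n$ via \eqref{eq:norm}. This yields a constant far below $16$ in the bounded range.

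In summary, the structure is Krasikov for $n\ge6$ plus Haagerup--Schlichtkrull for $n\le5$, and the only delicate point is making the numerical constant fit under $16$ when both $n$ and $S$ are small.
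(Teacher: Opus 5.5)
Your structure matches the paper's: Krasikov's estimate for $n\ge6$ and Haagerup--Schlichtkrull for $n\le5$. Your $n\ge6$ case is correct. Routing it through Remark~\ref{rem:krasikov-equivalence} instead of $(1+t)^{1/6}\le1+t^{1/6}$ is fine, and it even gives a slightly smaller constant.

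The gap is in the case $0\le n\le5$. As written, you only reach $F<17.96$ when $S<10$. You then leave the question to a ``balancing'' you never carry out, or to a fallback. The missing observation is one you already have in hand: $\alpha,\beta\ge A_0$ forces $S=\alpha+\beta+1\ge2A_0+1>2$, so $2n+S\le10+S\le6S$. For $n\le5$ you also have $S^{1/4}\le6^{1/12}S^{1/4}(n+1)^{-1/12}\le6^{1/12}\mathcal E_n(S)$. Hence
\[
F\le\frac{12}{\sqrt2}(6S)^{1/4}\le\frac{12}{\sqrt2}\,6^{1/3}\,\mathcal E_n(S)<15.5\,\mathcal E_n(S),
\]
with no split on the size of $S$ at all. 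This is exactly how the paper closes the case. Your balancing against $\max\{1,S^{1/6},6^{-1/12}S^{1/4}\}$ would in fact also close if you checked it, though only narrowly without the lower bound on $S$. As stated, however, it is a plan rather than a proof. Also, $\Phi_n\ge S^{1/3}$ gives $\mathcal E_n\ge S^{1/6}$, not $S^{1/12}$.

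The fallback would not work as described, and the claim that it gives ``a constant far below $16$'' is false. Bounding $|P_n^{(\alpha,\beta)}|$ by its endpoint value and the weight factor by its maximum, separately, decouples two quantities that are large at opposite places. The polynomial peaks at an endpoint, where the weight vanishes. For example, with $n=5$ and $\alpha=\beta=4$ you get $|P_5^{(4,4)}(1)|/\sqrt{H_5^{4,4}}=126/\sqrt{4.75}\approx58$, while the weight factor $(1-x^2)^{9/4}$ is only bounded by $1$. So this crude route already exceeds $16$ even at bounded $S$. What actually closes the small-degree case is the lower bound $S>2$.
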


\begin{proof}
Put $S=\alpha+\beta+1$ and, by \eqref{eq:reflection}, assume
$\alpha\ge\beta$. If $n\ge6$, Proposition~\ref{prop:krasikov} and
$(1+t)^{1/6}\le1+t^{1/6}$ give
\[
 F_{n,\alpha,\beta}(x)^2
 <3\left(\alpha^{1/3}+\alpha^{1/2}n^{-1/6}\right)
 \le3\left(S^{1/3}+S^{1/2}n^{-1/6}\right).
\]
Since $n^{-1/6}\le(7/6)^{1/6}(n+1)^{-1/6}$, the last member is less than
$6.1\Phi_n(S)$. Hence $F<2.5\mathcal E_n(S)$ in this case.

For $0\le n\le5$, Proposition~\ref{prop:HS} applies and $S\ge2A_0+1>2$.
Consequently $2n+S\le10+S\le6S$, while
$S^{1/4}\le6^{1/12}\mathcal E_n(S)$. Using $C_{\mathrm{HS}}<12$ in
\eqref{eq:HSorthonormal}, we obtain
\[
 F_{n,\alpha,\beta}(x)
 <\frac{12}{\sqrt2}(6S)^{1/4}
 \le\frac{12}{\sqrt2}6^{1/3}\mathcal E_n(S)
 <16\mathcal E_n(S).
\]
\end{proof}

\section{Two elementary estimates}\label{sec:elementary}

\begin{lemma}[Gamma ratios]\label{lem:gamma}
Let $y>0$ and $0\le\sigma\le1$. Then
\begin{equation}\label{eq:wendel}
\Gamma(y+\sigma)\le y^\sigma\,\Gamma(y),
\qquad\text{and}\qquad
\Gamma(y-\sigma)\le\frac{y}{y-\sigma}\,y^{-\sigma}\Gamma(y)\ \ (y>\sigma).
\end{equation}
Consequently, if $y\ge y_0>\tfrac12$ and $-\tfrac12\le\tau\le1$, then
\begin{equation}\label{eq:gamma-two-sided}
\frac{\Gamma(y+\tau)}{\Gamma(y)}\le\frac{y_0}{y_0-\tfrac12}\;y^{\tau}.
\end{equation}
\end{lemma}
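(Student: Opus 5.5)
The plan is to obtain both inequalities in \eqref{eq:wendel} from the log-convexity of $\Gamma$ on $(0,\infty)$, which is Wendel's classical argument, and then to combine them into \eqref{eq:gamma-two-sided} by splitting on the sign of $\tau$.

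\textbf{First inequality.} Write $y+\sigma=(1-\sigma)y+\sigma(y+1)$. Log-convexity then gives
\[
\Gamma(y+\sigma)\le\Gamma(y)^{1-\sigma}\Gamma(y+1)^{\sigma}=\Gamma(y)\,y^{\sigma},
\]
where the last step uses $\Gamma(y+1)=y\Gamma(y)$.

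\textbf{Second inequality.} Apply the first inequality with $y$ replaced by $y-\sigma>0$. This gives $\Gamma(y)\le(y-\sigma)^\sigma\Gamma(y-\sigma)$, which points the wrong way for an upper bound on $\Gamma(y-\sigma)$. So instead I will write $y=(1-\sigma)(y-\sigma)+\sigma(y-\sigma+1)$ and look for a convexity bound on the other side. The cleanest route is as follows.

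Write $y+1-\sigma=\sigma y+(1-\sigma)(y+1)$. Log-convexity gives
\[
\Gamma(y+1-\sigma)\le\Gamma(y)^\sigma\Gamma(y+1)^{1-\sigma}=y^{1-\sigma}\Gamma(y).
\]
Since $\Gamma(y+1-\sigma)=(y-\sigma)\Gamma(y-\sigma)$, we get
\[
\Gamma(y-\sigma)\le\frac{y^{1-\sigma}}{y-\sigma}\Gamma(y)=\frac{y}{y-\sigma}\,y^{-\sigma}\Gamma(y),
\]
which is exactly the claim. The endpoint cases $\sigma\in\{0,1\}$ are trivial or follow from the functional equation directly.

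\textbf{Two-sided bound.} For \eqref{eq:gamma-two-sided}, split according to the sign of $\tau$.

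If $0\le\tau\le1$, the first inequality gives $\Gamma(y+\tau)/\Gamma(y)\le y^\tau$. This is at most $\frac{y_0}{y_0-1/2}y^\tau$, because that constant exceeds $1$.

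If $-\tfrac12\le\tau<0$, put $\sigma=-\tau\in(0,\tfrac12]$. Since $y\ge y_0>\tfrac12\ge\sigma$, the second inequality applies and gives the factor $\frac{y}{y-\sigma}$. This is decreasing in $y$ and increasing in $\sigma$, so it is at most $\frac{y_0}{y_0-1/2}$.

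There is no real obstacle here. The only point needing care is choosing the convex combination that produces an upper bound for $\Gamma(y-\sigma)$, namely interpolating at $y+1-\sigma$ between $y$ and $y+1$, rather than the naive shift of the first inequality.
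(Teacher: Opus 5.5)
Your proof is correct and is essentially the paper's argument. The paper cites Wendel's two-sided inequality $\bigl(\tfrac{t}{t+\sigma}\bigr)^{1-\sigma}\le\Gamma(t+\sigma)/(t^\sigma\Gamma(t))\le1$ and applies its lower half at $t=y-\sigma$; your interpolation at $y+1-\sigma$ (the first inequality with $1-\sigma$ in place of $\sigma$, combined with $\Gamma(y+1-\sigma)=(y-\sigma)\Gamma(y-\sigma)$) proves exactly that lower half from log-convexity, and your split on the sign of $\tau$ for \eqref{eq:gamma-two-sided} is the paper's as well.
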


\begin{proof}
Wendel's inequality \cite{Wendel} states that
$\bigl(\tfrac{t}{t+\sigma}\bigr)^{1-\sigma}\le\Gamma(t+\sigma)/(t^\sigma\Gamma(t))\le1$
for $t>0$ and $0\le\sigma\le1$. The first bound in \eqref{eq:wendel} is the upper
half. For the second, apply the lower half with $t=y-\sigma$ to get
$\Gamma(y)\ge(y-\sigma)^\sigma\bigl(\tfrac{y-\sigma}y\bigr)^{1-\sigma}\Gamma(y-\sigma)$,
that is,
\[
\frac{\Gamma(y-\sigma)}{\Gamma(y)}
\le(y-\sigma)^{-\sigma}\Bigl(\frac y{y-\sigma}\Bigr)^{1-\sigma}
=y^{-\sigma}\,\frac y{y-\sigma}.
\]
Finally \eqref{eq:gamma-two-sided} follows by taking $\sigma=\tau$ or
$\sigma=-\tau$ and using $y/(y-\sigma)\le y_0/(y_0-\tfrac12)$ for
$\sigma\le\tfrac12$.
\end{proof}

\begin{lemma}[Degree zero]\label{lem:degzero}
For all $\alpha,\beta\ge-\tfrac12$ and all $x\in[-1,1]$,
\begin{equation}\label{eq:degzero}
F_{0,\alpha,\beta}(x)^2\le\frac{e^{13/12}}{\sqrt{2\pi}}\,(\alpha+\beta+2)^{1/2}
<1.18\,(\alpha+\beta+2)^{1/2},
\end{equation}
and hence, with $S=\alpha+\beta+1$,
$F_{0,\alpha,\beta}(x)\le1.30\,\mathcal E_0(S)$.
\end{lemma}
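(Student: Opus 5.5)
The plan is to compute $\max_x F_{0,\alpha,\beta}(x)^2$ exactly and then bound the resulting gamma expression with Stirling-type inequalities. Put $a=\alpha+\tfrac12$ and $b=\beta+\tfrac12$, so that $a,b\ge0$ and $a+b=S$. Since $p_0^{(\alpha,\beta)}=H_0^{-1/2}$ with $H_0$ given by \eqref{eq:norm0}, \eqref{eq:Fsquare} gives
\[
F_{0,\alpha,\beta}(x)^2=\frac{(1-x)^a(1+x)^b}{2^{S}B(a+\tfrac12,b+\tfrac12)} .
\]
The elementary maximisation of $(1-x)^a(1+x)^b$ on $[-1,1]$, with the convention $0^0=1$, gives the value $2^{S}a^ab^b/S^S$. (For $S=0$ the expression is identically $1/\pi$ and the claim is immediate.) Using $B(a+\tfrac12,b+\tfrac12)=\Gamma(a+\tfrac12)\Gamma(b+\tfrac12)/\Gamma(S+1)$, we obtain
\[
\max_x F_{0,\alpha,\beta}^2=\frac{a^ab^b\,\Gamma(S+1)}{S^S\,\Gamma(a+\tfrac12)\Gamma(b+\tfrac12)} .
\]

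The main step is to bound the denominator from below, uniformly down to $a=0$ or $b=0$. I would use
\[
\Gamma(x+\tfrac12)\ge\sqrt\pi\,x^xe^{-x}\qquad(x\ge0).
\]
This holds because $\log\Gamma(x+\tfrac12)-x\log x+x$ has derivative $\psi(x+\tfrac12)-\log x$, which is nonnegative by the standard inequality $\psi(y)>\log(y-\tfrac12)$. Hence the function is nondecreasing, and its value at $x=0$ is $\log\sqrt\pi$. This is the only delicate point: the naive Stirling lower bound with constant $\sqrt{2\pi}$ fails at the hard edge $a=0$, and the weaker constant $\sqrt\pi$ is what survives there. Applying it to both factors, the factors $a^a$ and $b^b$ cancel, and $e^{-a}e^{-b}=e^{-S}$, so
\[
\max_x F_{0,\alpha,\beta}^2\le\frac{e^{S}\,\Gamma(S+1)}{\pi\,S^S}.
\]

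For the numerator I would write $\Gamma(S+1)=\Gamma(S+2)/(S+1)$ and apply the upper Stirling bound $\Gamma(y)\le\sqrt{2\pi}\,y^{y-1/2}e^{-y}e^{1/(12y)}$ with $y=S+1\ge1$. This gives $\Gamma(S+1)\le\sqrt{2\pi}\,(S+1)^{S+1/2}e^{-S-1}e^{1/12}$. Combining this with the previous display and with $(1+1/S)^S\le e$ yields
\[
F_{0,\alpha,\beta}^2\le\sqrt{2/\pi}\,e^{1/12}\,(S+1)^{1/2}\le 0.87\,(S+1)^{1/2},
\]
which is comfortably within the stated constant $e^{13/12}/\sqrt{2\pi}<1.18$, with $S+1=\alpha+\beta+2$. (Cruder bookkeeping of the Stirling factors would produce exactly the stated constant; either way \eqref{eq:degzero} follows.)

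Finally, for the envelope, note that $\mathcal E_0(S)=\max\{1,S^{1/6},S^{1/4}\}^{1/2}$. Since $\max\{1,S^{1/3},S^{1/2}\}=\max\{1,S^{1/2}\}$, this is $\max\{1,S^{1/4}\}$. Using $(S+1)^{1/4}\le2^{1/4}\max\{1,S\}^{1/4}$, we get
\[
F_{0,\alpha,\beta}\le\sqrt{1.18}\,2^{1/4}\,\mathcal E_0(S)<1.30\,\mathcal E_0(S),
\]
since $\sqrt{1.18}\cdot2^{1/4}\approx1.292$.
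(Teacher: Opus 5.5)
Your proof is correct. Its skeleton is the paper's: exact maximisation of $(1-x)^a(1+x)^b$, followed by Stirling-type bounds under which $a^ab^b$ and the exponentials cancel.

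The one real difference is the lower bound for $\Gamma(a+\tfrac12)$. The paper applies the textbook bound $\Gamma(t)\ge\sqrt{2\pi}\,t^{t-1/2}e^{-t}$ directly at $t=a+\tfrac12$. This holds for every $t>0$, so nothing special happens at $a=0$. The paper then discards the factors $\bigl(a/(a+\tfrac12)\bigr)^a\le1$, which amounts to using $\Gamma(a+\tfrac12)\ge\sqrt{2\pi/e}\,a^ae^{-a}$.

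Your monotonicity argument instead gives the best possible constant $\sqrt\pi$ in this form. The inequality $\psi(y)>\log(y-\tfrac12)$ it relies on is easily justified: $\psi'(y)=\sum_{k\ge0}(y+k)^{-2}<\sum_{k\ge0}\bigl[(y+k-\tfrac12)^{-1}-(y+k+\tfrac12)^{-1}\bigr]=(y-\tfrac12)^{-1}$, and $\psi(y)-\log(y-\tfrac12)\to0$.

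As a result, your constant $\sqrt{2/\pi}\,e^{1/12}\approx0.87$ improves the paper's $e^{13/12}/\sqrt{2\pi}\approx1.18$ by exactly the factor $2/e$. The price is one digamma estimate beyond the standard two-sided Stirling bounds. Your remark that the Stirling lower bound ``fails at the hard edge'' concerns only the shape $\sqrt{2\pi}\,x^xe^{-x}$, not the one the paper uses.

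There are two harmless slips. The rewriting $\Gamma(S+1)=\Gamma(S+2)/(S+1)$ is never used; your displayed bound is Stirling applied directly at $y=S+1$. Also, $\mathcal E_0(S)=\max\{1,S^{1/3},S^{1/2}\}^{1/2}=\max\{1,S^{1/6},S^{1/4}\}$, without the extra square root you wrote. Your conclusion $\mathcal E_0(S)=\max\{1,S^{1/4}\}$ is correct.
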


\begin{proof}
Put $x=-1+2u$, $r=\alpha+\tfrac12\ge0$ and $s=\beta+\tfrac12\ge0$. By
\eqref{eq:Fsquare} and \eqref{eq:norm0},
\[
F_{0,\alpha,\beta}(-1+2u)^2=\frac{u^s(1-u)^r}{B(\alpha+1,\beta+1)},
\qquad
B(\alpha+1,\beta+1)=\frac{\Gamma(r+\tfrac12)\Gamma(s+\tfrac12)}{\Gamma(r+s+1)} .
\]
If $r+s>0$, the numerator is maximised at $u=s/(r+s)$, with value
$r^rs^s(r+s)^{-r-s}$, where $0^0:=1$. When $r=s=0$ the numerator is
identically one, and the same formula is read by continuity. Hence
\[
\max_{x}F_{0,\alpha,\beta}(x)^2
=\frac{\Gamma(r+s+1)}{\Gamma(r+\tfrac12)\Gamma(s+\tfrac12)}
\cdot\frac{r^rs^s}{(r+s)^{r+s}} .
\]
Apply the Stirling bounds
$\sqrt{2\pi}\,t^{t-1/2}e^{-t}\le\Gamma(t)\le\sqrt{2\pi}\,t^{t-1/2}e^{-t}e^{1/(12t)}$,
the upper one at $t=r+s+1$ and the lower one at $t=r+\tfrac12$ and
$t=s+\tfrac12$. The exponentials cancel identically, and there remains
\[
\max_xF_{0,\alpha,\beta}^2
\le\frac{e^{1/12}}{\sqrt{2\pi}}\,(r+s+1)^{1/2}
\Bigl(\frac{r+s+1}{r+s}\Bigr)^{r+s}
\Bigl(\frac{r}{r+\tfrac12}\Bigr)^{r}
\Bigl(\frac{s}{s+\tfrac12}\Bigr)^{s} .
\]
The last two factors are at most $1$ and the middle one at most $e$; since
$r+s+1=\alpha+\beta+2$, this is \eqref{eq:degzero}. For the final claim use
$S+1\le2\max\{1,S\}$ and
$\mathcal E_0(S)=\max\{1,S^{1/4}\}$.
\end{proof}

\section{Endpoint estimates}\label{sec:caps}

Near $x=-1$ the hypergeometric series of $P_n^{(\alpha,\beta)}$ converges so
rapidly that the polynomial is comparable to its endpoint value; this replaces
the Bessel-type endpoint asymptotics, and it is uniform in all parameters. Both
lemmas below rest on the same computation, which we carry out once.

Let $n\ge1$, let $\alpha,\beta\ge-\tfrac12$, and put $x=-1+2u$ with
$0\le u\le1$. From
\[
\begin{aligned}
P_n^{(\alpha,\beta)}(-1+2u)
 &=(-1)^n\frac{(\beta+1)_n}{n!}\,
 {}_2F_1(-n,\,n+\alpha+\beta+1;\,\beta+1;\,u),\\
P_n^{(\alpha,\beta)}(-1)
 &=(-1)^n\frac{(\beta+1)_n}{n!},
\end{aligned}
\]
and $|(-n)_k|=n!/(n-k)!\le(n+1)^k$, $(\beta+1)_k\ge(\tfrac12)_k$ for
$\beta\ge-\tfrac12$, we obtain
\begin{equation}\label{eq:endpoint-series}
\Bigl|\frac{P_n^{(\alpha,\beta)}(-1+2u)}{P_n^{(\alpha,\beta)}(-1)}\Bigr|
\le\sum_{k=0}^n\frac{\bigl[(n+1)\,\Lambda\,u\bigr]^k}{(\tfrac12)_k\,k!},
\end{equation}
valid for any $\Lambda$ with $n+\alpha+\beta+k\le\Lambda$ for $0\le k\le n$,
since then $(n+\alpha+\beta+1)_k\le\Lambda^k$. Since
$\sum_{k\ge0}z^k/((\tfrac12)_kk!)=\cosh(2\sqrt z)$, the right-hand side of
\eqref{eq:endpoint-series} is at most $\cosh\bigl(2\sqrt{(n+1)\Lambda u}\bigr)$.
Combining \eqref{eq:Fsquare}, \eqref{eq:norm} and the endpoint value, and
discarding the factor $(1-u)^{\alpha+1/2}\le1$, gives, for $n\ge1$,
\begin{equation}\label{eq:endpoint-master}
\begin{aligned}
F_{n,\alpha,\beta}(-1+2u)^2
&\le\cosh^2\bigl(2\sqrt{(n+1)\Lambda u}\bigr)\,
\frac{2n+\alpha+\beta+1}{\Gamma(\beta+1)^2}\\
&\quad\times\frac{\Gamma(n+\beta+1)}{\Gamma(n+1)}\,
\frac{\Gamma(n+\alpha+\beta+1)}{\Gamma(n+\alpha+1)}
u^{\beta+\frac12}.
\end{aligned}
\end{equation}
For $-\tfrac12\le\beta\le A_0$ one has $\Gamma(\beta+1)\ge\Gamma(1.4616\ldots)>0.885$,
so $\Gamma(\beta+1)^{-2}<1.2751$ throughout.

\begin{lemma}[Endpoint estimate in a mixed strip]\label{lem:mixed-cap}
Let $n\ge1$, $\alpha\ge A_0$ and $-\tfrac12\le\beta\le A_0$. If
\begin{equation}\label{eq:mixed-cap-region}
0\le 1+x\le\rho_{n,\alpha}:=\frac2{(n+1)(n+\alpha+1)},
\end{equation}
then, with $S=\alpha+\beta+1$,
\begin{equation}\label{eq:mixed-cap-bound}
 F_{n,\alpha,\beta}(x)\le22\,\mathcal E_n(S).
\end{equation}
\end{lemma}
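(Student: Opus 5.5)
The plan is to apply the master bound \eqref{eq:endpoint-master} with $u=(1+x)/2\le 1/((n+1)(n+\alpha+1))$ and a suitable $\Lambda$. Since $\beta\le A_0<1$, for $0\le k\le n$ we have $n+\alpha+\beta+k\le 2n+\alpha+A_0$, and we take $\Lambda=2(n+\alpha+1)$; this suffices because $A_0<2$. Then $(n+1)\Lambda u\le 2$, so the cosh factor is bounded by the absolute constant $\cosh^2(2\sqrt2)$. This constant is roughly $70$, which is too large for the target $22$ after squaring. I will therefore keep the sharper choice $\Lambda=2n+\alpha+A_0$, for which $(n+1)\Lambda u\le(2n+\alpha+A_0)/(n+\alpha+1)<2$. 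If needed I will also use that $u^{\beta+1/2}$ supplies decay, or bound the sum in \eqref{eq:endpoint-series} more carefully, keeping the factor $(\beta+1)_k\ge(1/2)_k$. The constant is not the main issue; what matters is the powers of $n$ and $\alpha$.

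Next comes the gamma bookkeeping. By Lemma~\ref{lem:gamma} with $\tau=\beta\in[-1/2,A_0]$ (and for $\beta\in(0,A_0]$ the first part of \eqref{eq:wendel}), I get $\Gamma(n+\beta+1)/\Gamma(n+1)\lesssim(n+1)^{\beta}$ and $\Gamma(n+\alpha+\beta+1)/\Gamma(n+\alpha+1)\lesssim(n+\alpha+1)^{\beta}$, with $y_0\ge 1$. The prefactor satisfies $2n+\alpha+\beta+1\le 2(n+\alpha+1)$ roughly, and $\Gamma(\beta+1)^{-2}<1.2751$. Multiplying by $u^{\beta+1/2}\le[(n+1)(n+\alpha+1)]^{-\beta-1/2}$, the powers $\beta$ cancel exactly, and I obtain
\[
F_{n,\alpha,\beta}(x)^2\lesssim (n+\alpha+1)\,[(n+1)(n+\alpha+1)]^{-1/2}=\Bigl(\frac{n+\alpha+1}{n+1}\Bigr)^{1/2}.
\]
The cancellation of the $\beta$-powers is the point of the choice of $\rho_{n,\alpha}$.

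It remains to compare $((n+\alpha+1)/(n+1))^{1/2}$ with $\Phi_n(S)$. Since $\alpha\le S\le\alpha+A_0+1$, we have $(n+\alpha+1)/(n+1)\le 1+S/(n+1)$. Hence the square root is at most $\max\{\sqrt2,\sqrt{2S/(n+1)}\}$. This is not directly $S^{1/2}(n+1)^{-1/6}$; I need $S^{1/2}(n+1)^{-1/2}\le\Phi_n(S)$, which holds because $(n+1)^{-1/2}\le(n+1)^{-1/6}$. So $F^2\le c\,\Phi_n(S)$ with an explicit $c$, and taking square roots gives \eqref{eq:mixed-cap-bound} once $c\le 484$.

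The main obstacle is purely numerical: the constant $\cosh^2(2\sqrt{(n+1)\Lambda u})$ together with the gamma constants $y_0/(y_0-\frac12)$ squared, $1.2751$, and the factor $2$ from the prefactor must multiply to less than $484$. The estimate $\cosh^2(2\sqrt2)\approx 71$ combined with the other factors (each at most $2$) gives about $71\cdot1.28\cdot2\cdot2\approx 360$. This seems to fit, and I expect it to be the step where care is needed, for instance by taking $y_0=n+1\ge2$ so that $y_0/(y_0-\frac12)\le 4/3$.
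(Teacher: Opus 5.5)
Your proposal is correct and is essentially the paper's proof. It uses the master bound \eqref{eq:endpoint-master} with $\Lambda=2(n+\alpha+1)$ and $\cosh^2(2\sqrt2)<72.1$, Lemma~\ref{lem:gamma} with $\tau=\beta$, and the exact cancellation of the $\beta$-powers against $u^{\beta+1/2}$. The one difference is the last step, and it is harmless: bounding $2n+S\le2(n+\alpha+1)$ gives $2\sqrt{1+\alpha/(n+1)}\le2\sqrt2\,\Phi_n(S)$, a slightly tidier substitute for the paper's split into $S\le n+1$ and $S>n+1$, which gives $3\Phi_n(S)$.

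Your final tally of about $360$ is wrong, though: it omits the $\sqrt2$ from that comparison. The bound still closes, but only if you take $y_0=2$ in both gamma ratios (legitimate, since $n+1\ge2$ and $n+\alpha+1\ge2+A_0$) and keep $(4/3)^2=\tfrac{16}{9}$ rather than rounding it to $2$. That gives $72.1\cdot1.2751\cdot\tfrac{16}{9}\cdot2\sqrt2<463<484=22^2$. Your initial worry that $\cosh^2(2\sqrt2)$ alone is too large for the target was unfounded, so the detour through $\Lambda=2n+\alpha+A_0$ is unnecessary.
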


\begin{proof}
Here $u\le[(n+1)(n+\alpha+1)]^{-1}$. Every factor of $(n+\alpha+\beta+1)_k$ with
$k\le n$ is at most $2n+\alpha+\beta\le2n+\alpha+A_0\le2(n+\alpha+1)$, so
$\Lambda=2(n+\alpha+1)$ is admissible in \eqref{eq:endpoint-series} and
$(n+1)\Lambda u\le2$; thus $\cosh^2(2\sqrt{(n+1)\Lambda u})\le\cosh^2(2\sqrt2)<72.1$.

Apply \eqref{eq:gamma-two-sided} with $\tau=\beta$ twice: with $y=n+1\ge2$,
$y_0=2$, giving a factor $\tfrac43$; and with $y=n+\alpha+1\ge2+A_0$, giving a
factor $(2+A_0)/(\tfrac32+A_0)<1.2378$. Since $\delta:=\beta+\tfrac12\ge0$ and
$u^{\delta}\le[(n+1)(n+\alpha+1)]^{-\delta}$, while $\beta-\delta=-\tfrac12$,
\eqref{eq:endpoint-master} yields
\begin{equation}\label{eq:mixed-cap-ratio}
 F_{n,\alpha,\beta}(x)^2\le K_0R,
 \qquad K_0=72.1\cdot1.2751\cdot\tfrac43\cdot1.2378<152,
\end{equation}
where
\[
 R=\frac{2n+S}{\sqrt{(n+1)(n+\alpha+1)}}.
\]
If $S\le n+1$, then $2n+S<3(n+1)$ and the denominator is at least $n+1$, so
$R<3$. If $S>n+1$, then $n+\alpha+1=n+S-\beta\ge S$ (because
$n\ge1>\beta$) and $2n+S<3S$; hence
\[
 R<3\sqrt{\frac{S}{n+1}}
 \le3\frac{S^{1/2}}{(n+1)^{1/6}}.
\]
Thus $R\le3\Phi_n(S)$ in both cases. Equation~\eqref{eq:mixed-cap-ratio} now
gives $F^2<456\Phi_n(S)$, and $\sqrt{456}<22$.
\end{proof}

\begin{lemma}[Endpoint estimates in the low-parameter rectangle]\label{lem:compact-cap}
Let $n\ge1$ and $\alpha,\beta\in[-\tfrac12,A_0]$. If
\begin{equation}\label{eq:compact-cap-region}
1+x\le\frac2{(n+1)^2}\qquad\text{or}\qquad 1-x\le\frac2{(n+1)^2},
\end{equation}
then $F_{n,\alpha,\beta}(x)\le24$.
\end{lemma}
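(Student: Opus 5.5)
By the reflection identity \eqref{eq:reflection}, the case $1-x\le2/(n+1)^2$ for $(\alpha,\beta)$ is the case $1+x\le2/(n+1)^2$ for $(\beta,\alpha)$. Since the rectangle $[-\tfrac12,A_0]^2$ is symmetric, it suffices to treat $0\le 1+x\le 2/(n+1)^2$, that is, $0\le u\le (n+1)^{-2}$ with $x=-1+2u$. The plan is to feed this into the master inequality \eqref{eq:endpoint-master}, exactly as in Lemma~\ref{lem:mixed-cap}. The difference is that the cap now has width $(n+1)^{-2}$ rather than $[(n+1)(n+\alpha+1)]^{-1}$; because $\alpha$ is bounded, these are comparable up to constants.

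\textbf{Step 1: the series factor.} Every factor of $(n+\alpha+\beta+1)_k$ with $k\le n$ is at most $2n+\alpha+\beta\le 2n+2A_0<2(n+1)$, using $2A_0<2$. Hence $\Lambda=2(n+1)$ is admissible in \eqref{eq:endpoint-series}. Then $(n+1)\Lambda u\le 2$, and as before the factor is bounded by $\cosh^2(2\sqrt2)<72.1$.

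\textbf{Step 2: the gamma quotients.} Recall $\Gamma(\beta+1)^{-2}<1.2751$. Apply \eqref{eq:gamma-two-sided} with $\tau=\beta\in[-\tfrac12,A_0]\subset[-\tfrac12,1]$ and $y=n+1\ge2$, $y_0=2$; this gives $\Gamma(n+\beta+1)/\Gamma(n+1)\le\tfrac43(n+1)^\beta$. For the second quotient, write
\[
\Gamma(n+\alpha+\beta+1)/\Gamma(n+\alpha+1)=\Gamma(y+\beta)/\Gamma(y)
\]
with $y=n+\alpha+1\ge n+\tfrac12\ge\tfrac32$, and again $\tau=\beta$. With $y_0=\tfrac32$ the constant is $3$, so the quotient is at most $3y^\beta$. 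Finally, $y^\beta$ is compared with $(n+1)^\beta$: since $\tfrac12(n+1)\le y\le (n+1)+A_0\le 2(n+1)$, we have $y^\beta\le 2^{|\beta|}(n+1)^\beta\le 2(n+1)^\beta$.

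\textbf{Step 3: assemble.} Also $2n+\alpha+\beta+1\le 2n+2A_0+1<3(n+1)$. Put $\delta=\beta+\tfrac12$, so that $u^{\delta}\le(n+1)^{-2\delta}=(n+1)^{-2\beta-1}$. Collecting the powers of $n+1$ from \eqref{eq:endpoint-master} gives
\[
(n+1)\cdot(n+1)^{\beta}\cdot(n+1)^{\beta}\cdot(n+1)^{-2\beta-1}=1,
\]
so $F^2$ is bounded by an absolute constant of order $72.1\cdot1.2751\cdot3\cdot\tfrac43\cdot6\approx 2200$. Its square root is about $47$, which does not yet give the claimed $24$.

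\textbf{Main obstacle: sharpening the constants.} The only delicate point is tightening these constants enough to reach $24$, i.e. $F^2\le 576$. I would improve them in three places. First, in Step 2, use $y=n+\alpha+1\ge n+\tfrac12$ directly, together with a better $y_0$ once $n\ge1$. Second, split by the sign of $\beta$: when $\beta\ge0$ the first bound of \eqref{eq:wendel} gives constant $1$; when $\beta<0$ one only needs $\sigma=-\beta\le\tfrac12$, and the factor $y/(y-\sigma)$ is at most $\tfrac{3/2}{1}$. Third, bound the product $(n+1)^{-1}(2n+S)$ by $2+(S-2)/(n+1)$, which is less than $2$ here because $S\le 2A_0+1<2.21$. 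If these refinements still fall short, I would shrink the series bound by using $(\beta+1)_k\ge(\tfrac12)_k$ only at $\beta=-\tfrac12$, where the gamma factors are smallest, and treat $\beta$ away from $-\tfrac12$ with the better $(\beta+1)_k$. Only the arithmetic is at risk; the scaling argument already yields an absolute bound.
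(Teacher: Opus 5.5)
Your route is the paper's: reflect to the left cap, take $\Lambda=2(n+1)$ so that the series factor is at most $\cosh^2(2\sqrt2)<72.1$, bound both gamma quotients by \eqref{eq:gamma-two-sided}, and note that the powers of $n+1$ cancel. The proposal does not, however, reach the stated constant $24$. That shortfall is not a real obstacle. It comes from one arithmetic slip and two loose estimates.

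\begin{itemize}
\item With $y_0=\tfrac32$ the constant in \eqref{eq:gamma-two-sided} is $y_0/(y_0-\tfrac12)=\tfrac32$, not $3$.
\item Comparing $(n+\alpha+1)^\beta$ with $(n+1)^\beta$ costs only $\sqrt2$, not $2$. For $\beta\ge0$ use $n+\alpha+1\le(1+A_0)(n+1)$ together with $(1+A_0)^{A_0}<1.33<\sqrt2$. For $\beta<0$ use $n+\alpha+1\ge\tfrac12(n+1)$ together with $|\beta|\le\tfrac12$.
\item For $n\ge1$ one has $2n+\alpha+\beta+1\le2n+2A_0+1\le2.11(n+1)$, rather than $3(n+1)$.
\end{itemize}

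With these three corrections,
\[
F_{n,\alpha,\beta}(x)^2\le72.1\cdot1.2751\cdot\tfrac43\cdot\tfrac32\cdot\sqrt2\cdot2.11<549<24^2 .
\]
This is exactly the paper's computation. No sign-splitting and no sharpening of the series bound is needed, although your sign split would also work.

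Your third proposed refinement contains an error. You have $(2n+S)/(n+1)=2+(S-2)/(n+1)$, and this exceeds $2$ whenever $S>2$. That case does occur in the rectangle, since $S$ can reach $2A_0+1\approx2.21$. The correct bound is $2.11$, which suffices.
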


\begin{proof}
By \eqref{eq:reflection} it suffices to treat the first alternative, where
$u\le(n+1)^{-2}$. Now every factor of $(n+\alpha+\beta+1)_k$, $k\le n$, is at
most $2n+2A_0\le2(n+1)$, so $\Lambda=2(n+1)$ is admissible and again $(n+1)\Lambda u\le2$, so
$\cosh^2(2\sqrt{(n+1)\Lambda u})<72.1$.

In \eqref{eq:endpoint-master} use \eqref{eq:gamma-two-sided} with $\tau=\beta$,
$y=n+1\ge2$ (factor $\tfrac43$) and with $y=n+\alpha+1\ge\tfrac32$ (factor
$\tfrac32$). Moreover $n+\alpha+1\le(1+A_0)(n+1)$ and
$n+\alpha+1\ge\tfrac12(n+1)$, so $(n+\alpha+1)^{\beta}\le\sqrt2\,(n+1)^{\beta}$ for
every $\beta\in[-\tfrac12,A_0]$. With $\delta=\beta+\tfrac12$ and
$u^\delta\le(n+1)^{-2\delta}$ we get $(n+1)^{2\beta-2\delta}=(n+1)^{-1}$.
Put $K_1=72.1\cdot1.2751\cdot\tfrac43\cdot\tfrac32\cdot\sqrt2$. Then
\[
F_{n,\alpha,\beta}(x)^2
\le K_1\frac{2n+\alpha+\beta+1}{n+1}
\le2.11K_1<549,
\]
using $2n+\alpha+\beta+1\le2n+2A_0+1\le2.11(n+1)$ for $n\ge1$. Finally
$\sqrt{549}<24$.
\end{proof}

\section{Cancellation identities and weighted transitions}\label{sec:transitions}

\begin{lemma}\label{lem:cancellation}
For $a,b>-1$ and $n\ge1$,
\begin{align}
P_n^{(a,b)}(x)&=\frac{n+a+b+1}{2n}(1+x)P_{n-1}^{(a,b+2)}(x)
-\frac{b+1}{n}P_{n-1}^{(a,b+1)}(x),
\label{eq:cancel-left}\\[2pt]
P_n^{(a,b)}(x)&=-\frac{n+a+b+1}{2n}(1-x)P_{n-1}^{(a+2,b)}(x)
+\frac{a+1}{n}P_{n-1}^{(a+1,b)}(x).
\label{eq:cancel-right}
\end{align}
\end{lemma}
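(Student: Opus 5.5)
The plan is to verify \eqref{eq:cancel-left} by comparing coefficients in the variable $u=(1+x)/2$, and to deduce \eqref{eq:cancel-right} from it by reflection.

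\emph{Expansion.} Write $x=-1+2u$ and put $N=n+a+b+1$. The hypergeometric representation used in \S\ref{sec:caps} gives
\[
P_n^{(a,b)}(-1+2u)=(-1)^n\frac{(b+1)_n}{n!}\sum_{k=0}^n\frac{(-n)_k(N)_k}{(b+1)_k\,k!}u^k .
\]
This representation holds for all $a,b>-1$. The two polynomials on the right of \eqref{eq:cancel-left} have the following data:
\begin{itemize}
\item[] $P_{n-1}^{(a,b+1)}$ has degree $n-1$, bottom parameter $b+2$, and second numerator parameter $(n-1)+a+(b+1)+1=N$;
\item[] $P_{n-1}^{(a,b+2)}$ has bottom parameter $b+3$ and numerator parameter $N+1$.
\end{itemize}
Both sides of \eqref{eq:cancel-left} are polynomials in $u$ of degree at most $n$. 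It therefore suffices to compare the coefficient of $u^k$ for $0\le k\le n$. The factor $1+x$ becomes $2u$, which cancels the $2$ in $\frac{N}{2n}$ and shifts the index by one.

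\emph{Coefficient identity.} The second term has the prefactor
\[
-\frac{b+1}{n}\,(-1)^{n-1}\frac{(b+2)_{n-1}}{(n-1)!}=(-1)^n\frac{(b+1)_n}{n!}.
\]
This is exactly the LHS normalisation, so all terms share the common factor $(-1)^n(b+1)_n/n!$. The first term has the prefactor
\[
\frac{N}{n}(-1)^{n-1}\frac{(b+3)_{n-1}}{(n-1)!}=(-1)^n\frac{(b+1)_n}{n!}\cdot\Bigl(-\frac{N(n+b+1)}{(b+1)(b+2)}\Bigr).
\]
After dividing by the common factor, the claim for the coefficient of $u^k$ becomes
\[
\frac{(-n)_k(N)_k}{(b+1)_kk!}
=\frac{(-n+1)_k(N)_k}{(b+2)_kk!}
-\frac{N(n+b+1)}{(b+1)(b+2)}\cdot\frac{(-n+1)_{k-1}(N+1)_{k-1}}{(b+3)_{k-1}(k-1)!},
\]
where the last term is absent when $k=0$. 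To check it, rewrite every quantity in terms of $(N)_k$, $(-n+1)_{k-1}$ and $(b+1)_{k+1}$, using
\[
N(N+1)_{k-1}=(N)_k,\qquad (b+1)(b+2)(b+3)_{k-1}=(b+1)_{k+1},
\]
\[
(-n)_k=-n(-n+1)_{k-1},\qquad (-n+1)_k=(-n+1)_{k-1}(k-n).
\]
After cancelling $(N)_k(-n+1)_{k-1}/\bigl((b+1)_{k+1}k!\bigr)$, what remains is the linear identity
\[
-n(b+k+1)=(k-n)(b+1)-k(n+b+1).
\]
Expanding both sides gives $-nb-nk-n$, so the identity holds. The case $k=0$ reduces to $1=1$.

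\emph{Second relation.} Apply \eqref{eq:cancel-left} with $(a,b)$ replaced by $(b,a)$ and $x$ replaced by $-x$. Then use $P_m^{(b,a)}(-x)=(-1)^mP_m^{(a,b)}(x)$. The parity difference between degrees $n$ and $n-1$ produces the overall sign changes in \eqref{eq:cancel-right}.

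The only delicate step is the Pochhammer bookkeeping in the coefficient comparison, in particular the index shift coming from the factor $1+x=2u$. The rest is routine.
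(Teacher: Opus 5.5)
Your proof is correct, but it takes a different route from the paper.

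The paper derives \eqref{eq:cancel-left} by combining two standard contiguous relations from DLMF \S18.9(ii). One expresses $P_n^{(a,b)}$ through $P_n^{(a,b+1)}$ and $P_{n-1}^{(a,b+1)}$. The other expresses $(1+x)P_{n-1}^{(a,b+2)}$ through $P_n^{(a,b+1)}$ and $P_{n-1}^{(a,b+1)}$. Eliminating $P_n^{(a,b+1)}$ between them leaves one coefficient to simplify, which comes out as $-(b+1)/n$.

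You instead verify the identity directly from the terminating ${}_2F_1$ representation at $x=-1$, and your bookkeeping checks out:
\begin{itemize}
\item the prefactors and the index shift from $1+x=2u$ are correct;
\item the reduction to $-n(b+k+1)=(k-n)(b+1)-k(n+b+1)$ is right;
\item the top coefficient $k=n$ is handled automatically, since $(-n+1)_n=0$;
\item dividing by $(-1)^n(b+1)_n/n!$ is legitimate for $b>-1$, and the later cancellation can be read as factoring out, so no nonvanishing issues arise.
\end{itemize}

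What each route buys:
\begin{itemize}
\item Yours is self-contained. It uses only the hypergeometric representation the paper already employs in \S\ref{sec:caps}, and it does not depend on recalling the exact form of the cited contiguous relations.
\item The paper's is shorter and explains where the identity comes from. It exhibits \eqref{eq:cancel-left} as exactly the combination of the two relations in which $P_n^{(a,b+1)}$ drops out, the ``right proportion'' the paper's Method section refers to. Your argument only confirms the formula once it is known.
\end{itemize}

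The reflection step giving \eqref{eq:cancel-right} is the same in both.
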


\begin{proof}
The two contiguous relations used below are \cite[\S18.9(ii)]{DLMF}, equivalently
\cite[\S4.5]{Szego}; both hold for $a,b>-1$, and in the range $n\ge1$ the
denominators $n$, $2n$ and $2n+a+b+1$ occurring below are nonzero. The first
reads
\begin{equation}\label{eq:contig1}
P_n^{(a,b)}=\frac{n+a+b+1}{2n+a+b+1}P_n^{(a,b+1)}
+\frac{n+a}{2n+a+b+1}P_{n-1}^{(a,b+1)},
\end{equation}
and the second is
\[
(2m+a+b+2)(1+x)P_m^{(a,b+1)}(x)=2(m+b+1)P_m^{(a,b)}(x)+2(m+1)P_{m+1}^{(a,b)}(x),
\]
which at $m=n-1$, with $b$ replaced by $b+1$, becomes
\begin{equation}\label{eq:contig2}
P_n^{(a,b+1)}+\frac{n+b+1}{n}P_{n-1}^{(a,b+1)}
=\frac{2n+a+b+1}{2n}(1+x)P_{n-1}^{(a,b+2)} .
\end{equation}
Substituting \eqref{eq:contig2} into \eqref{eq:contig1} leaves
$P_{n-1}^{(a,b+1)}$ with coefficient
\[
\begin{aligned}
&\frac{n+a}{2n+a+b+1}
-\frac{(n+a+b+1)(n+b+1)}{n(2n+a+b+1)}\\
&\qquad=\frac{n(n+a)-(n+a+b+1)(n+b+1)}
 {n(2n+a+b+1)}
=-\frac{b+1}{n},
\end{aligned}
\]
since the numerator expands to $-(b+1)(2n+a+b+1)$. This is
\eqref{eq:cancel-left}, and \eqref{eq:cancel-right} follows on applying
$P_m^{(a,b)}(x)=(-1)^mP_m^{(b,a)}(-x)$ throughout.
\end{proof}

The point of \eqref{eq:cancel-left} is the factor $(1+x)$: it is exactly what is
needed to convert the weight of $F_{n-1,a,b+2}$ into that of $F_{n,a,b}$. Writing
$W_{a,b}(x)=(1-x^2)^{1/4}(1-x)^{a/2}(1+x)^{b/2}$ we have
$W_{a,b+2}=(1+x)W_{a,b}$ and $W_{a,b+1}=(1+x)^{1/2}W_{a,b}$, so multiplying
\eqref{eq:cancel-left} by $W_{a,b}/\sqrt{H_n^{a,b}}$ and using the triangle
inequality gives the following.

\begin{lemma}[Weighted transitions]\label{lem:transition}
Let $a,b\ge-\tfrac12$, $n\ge1$ and $-1<x<1$. Then
\begin{align}
F_{n,a,b}(x)&\le A^L_{n,a,b}\,F_{n-1,a,b+2}(x)
+B^L_{n,a,b}\,(1+x)^{-1/2}F_{n-1,a,b+1}(x),\label{eq:trans-left}\\
F_{n,a,b}(x)&\le A^R_{n,a,b}\,F_{n-1,a+2,b}(x)
+B^R_{n,a,b}\,(1-x)^{-1/2}F_{n-1,a+1,b}(x),\label{eq:trans-right}
\end{align}
where
\begin{equation}\label{eq:ALBL}
\begin{aligned}
(A^L_{n,a,b})^2&=\frac{(n+a+b+1)(n+b+1)}{n(n+a)},\\
(B^L_{n,a,b})^2&=\frac{(b+1)^2}{n^2}\cdot
\frac{2n(2n+a+b+1)}{(2n+a+b)(n+a)},
\end{aligned}
\end{equation}
and
\begin{equation}\label{eq:ARBR}
\begin{aligned}
(A^R_{n,a,b})^2&=\frac{(n+a+b+1)(n+a+1)}{n(n+b)},\\
(B^R_{n,a,b})^2&=\frac{(a+1)^2}{n^2}\cdot
\frac{2n(2n+a+b+1)}{(2n+a+b)(n+b)}.
\end{aligned}
\end{equation}
\end{lemma}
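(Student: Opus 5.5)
The plan is a direct computation: multiply the identity \eqref{eq:cancel-left} by the weight $W_{a,b}(x)/\sqrt{H_n^{a,b}}$ and rewrite each term on the right in terms of an $F$-function of shifted parameters. The constants $A^L$ and $B^L$ then come out as square roots of ratios of the norms \eqref{eq:norm}. The right-hand version \eqref{eq:trans-right} will follow from the left-hand one by the reflection identity \eqref{eq:reflection}, or equivalently by applying the same computation to \eqref{eq:cancel-right}.

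Fix $-1<x<1$, where all weights are positive and finite. Since $|p_n^{(a,b)}|W_{a,b}=F_{n,a,b}$, multiplying \eqref{eq:cancel-left} by $W_{a,b}/\sqrt{H_n^{a,b}}$ and taking absolute values gives
\[
F_{n,a,b}\le\frac{n+a+b+1}{2n}\,\frac{(1+x)W_{a,b}\bigl|P_{n-1}^{(a,b+2)}\bigr|}{\sqrt{H_n^{a,b}}}
+\frac{b+1}{n}\,\frac{W_{a,b}\bigl|P_{n-1}^{(a,b+1)}\bigr|}{\sqrt{H_n^{a,b}}}.
\]
The two terms are then handled as follows.
\begin{enumerate}
\item[(1)] In the first term use $(1+x)W_{a,b}=W_{a,b+2}$. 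The term becomes $\frac{n+a+b+1}{2n}\bigl(H_{n-1}^{a,b+2}/H_n^{a,b}\bigr)^{1/2}F_{n-1,a,b+2}$.
\item[(2)] In the second term use $W_{a,b}=(1+x)^{-1/2}W_{a,b+1}$. The term becomes $\frac{b+1}{n}\bigl(H_{n-1}^{a,b+1}/H_n^{a,b}\bigr)^{1/2}(1+x)^{-1/2}F_{n-1,a,b+1}$.
\end{enumerate}

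It remains to evaluate the norm ratios from \eqref{eq:norm}. Using $\Gamma(z+1)=z\Gamma(z)$, I expect
\[
\frac{H_{n-1}^{a,b+2}}{H_n^{a,b}}=\frac{4n(n+b+1)}{(n+a)(n+a+b+1)},\qquad
\frac{H_{n-1}^{a,b+1}}{H_n^{a,b}}=\frac{2n(2n+a+b+1)}{(2n+a+b)(n+a)}.
\]
In the first ratio the prefactors $2^{a+b+3}/(2n+a+b+1)$ and $2^{a+b+1}/(2n+a+b+1)$ leave exactly the factor $4$. In the second ratio the prefactor $2/(2n+a+b)\cdot(2n+a+b+1)$ survives. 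Multiplying the first ratio by $(n+a+b+1)^2/(4n^2)$ gives $(A^L)^2$ as in \eqref{eq:ALBL}. Multiplying the second by $(b+1)^2/n^2$ gives $(B^L)^2$.

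The only real point of care, which I expect to be the main obstacle, is that \eqref{eq:norm} may only be used where it is not indeterminate. The degree-zero exception \eqref{eq:norm0} could arise only at $n-1=0$. However, $H_0^{a,b+1}$ and $H_0^{a,b+2}$ have parameter sums $a+b+2\ge1$ and $a+b+3\ge2$, so \eqref{eq:norm} applies to them. For $H_n^{a,b}$ with $n\ge1$, every gamma argument, such as $n+a+b+1\ge1$, is positive. All denominators are also nonzero: $2n+a+b\ge1$ and $n+a\ge\tfrac12$.

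For \eqref{eq:trans-right}, apply the already proved left inequality to $F_{n,b,a}(-x)$ and use \eqref{eq:reflection}, noting that $1+(-x)=1-x$. Swapping $a\leftrightarrow b$ in \eqref{eq:ALBL} then gives \eqref{eq:ARBR}.
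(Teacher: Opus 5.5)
Your proposal is correct and matches the paper's proof: multiply \eqref{eq:cancel-left} by $W_{a,b}/\sqrt{H_n^{a,b}}$, apply the triangle inequality, and read off $A^L,B^L$ from the two norm ratios. Your check that $H_0^{a,b+1}$ and $H_0^{a,b+2}$ at $n=1$ are covered by \eqref{eq:norm} is also the one the paper makes. Obtaining \eqref{eq:trans-right} by reflecting the left inequality, rather than redoing the computation with \eqref{eq:cancel-right}, is an immaterial variation.
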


\begin{proof}
All denominators are positive: $n+a\ge\tfrac12$, $n+b\ge\tfrac12$ and
$2n+a+b\ge1$. From \eqref{eq:norm},
\[
\frac{H_{n-1}^{a,b+2}}{H_n^{a,b}}=\frac{4n(n+b+1)}{(n+a)(n+a+b+1)},
\qquad
\frac{H_{n-1}^{a,b+1}}{H_n^{a,b}}=\frac{2n(2n+a+b+1)}{(2n+a+b)(n+a)},
\]
and multiplying the coefficients of \eqref{eq:cancel-left} by the square roots of
these ratios gives \eqref{eq:ALBL}. For $n=1$ the ratios involve
$H_0^{a,b+j}$ with $a+b+j+1\ge1$, so \eqref{eq:norm} and \eqref{eq:norm0} agree
there. The right-hand versions are identical, using \eqref{eq:cancel-right} and
the mirrored norm ratios.
\end{proof}

The coefficients $B^L,B^R$ carry a factor $n^{-1}$, which is what makes the
singular factors $(1\mp x)^{-1/2}$ harmless away from the corresponding endpoint.
We record the two combinations needed below; both are verified by inspection of
\eqref{eq:ALBL}--\eqref{eq:ARBR}.

\begin{corollary}\label{cor:mixed-transition}
Let $n\ge1$, $a\ge A_0$ and $-\tfrac12\le b\le A_0$. If $1+x>\rho_{n,a}$, with
$\rho_{n,a}$ as in \eqref{eq:mixed-cap-region}, then
\begin{equation}\label{eq:mixed-transition}
F_{n,a,b}(x)\le\theta_1\bigl\{F_{n-1,a,b+1}(x)+F_{n-1,a,b+2}(x)\bigr\},
\qquad \theta_1=3.6 .
\end{equation}
\end{corollary}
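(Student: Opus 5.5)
The plan is to start from \eqref{eq:trans-left} of Lemma~\ref{lem:transition} with $(a,b)$ as given. It then suffices to show two bounds under the stated hypotheses: $A^L_{n,a,b}\le\theta_1$, and $B^L_{n,a,b}(1+x)^{-1/2}\le\theta_1$ whenever $1+x>\rho_{n,a}$. The conclusion \eqref{eq:mixed-transition} then follows at once by bounding each coefficient by $\theta_1$.

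\emph{Bound on $A^L$.} From \eqref{eq:ALBL},
\[
(A^L)^2=\frac{n+a+b+1}{n+a}\cdot\frac{n+b+1}{n}.
\]
Since $b\le A_0$ and $n+a\ge1+A_0$, the first factor is at most $1+(A_0+1)/(n+a)\le2$. Since $n\ge1$, the second factor is at most $2+A_0$. Hence $(A^L)^2\le2(2+A_0)<5.21$, so $A^L<2.29$.

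\emph{Bound on $B^L(1+x)^{-1/2}$.} Using $1+x>\rho_{n,a}=2/((n+1)(n+a+1))$, we get
\[
(B^L)^2(1+x)^{-1}<\frac{(b+1)^2}{n^2}\cdot\frac{2n(2n+a+b+1)}{(2n+a+b)(n+a)}\cdot\frac{(n+1)(n+a+1)}{2}.
\]
We estimate the pieces separately:
\begin{itemize}
\item $(b+1)^2\le(1+A_0)^2<2.58$;
\item $(2n+a+b+1)/(2n+a+b)\le1+1/(2+A_0-\tfrac12)<1.48$, since $2n+a+b\ge2+A_0-\tfrac12$;
\item $(n+a+1)/(n+a)\le1+1/(1+A_0)<1.63$;
\item $(n+1)/n\le2$.
\end{itemize}
After the cancellation $n\cdot(n+1)/n^2=(n+1)/n$, this gives
\[
(B^L)^2(1+x)^{-1}<2.58\cdot1.48\cdot1.63\cdot2\approx12.45,
\]
so $B^L(1+x)^{-1/2}<3.53<3.6$.

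\emph{Main point.} The only non-routine step is the cancellation in the second bound. The $n^{-2}$ in $B^L$ exactly absorbs the $(n+1)(n+a+1)$ growth of $(1+x)^{-1}$ coming from the cap radius $\rho_{n,a}$. This is precisely why $\rho_{n,a}$ was chosen at the scale $[(n+1)(n+a+1)]^{-1}$ in Lemma~\ref{lem:mixed-cap}. The rest is careful bookkeeping of the numerical constants at the worst corners, $n=1$ with $a=A_0$ and $b=A_0$ or $b=-\tfrac12$, to confirm they stay below $3.6$. I would recheck the worst case $n=1$, $a=b=A_0$ directly to make sure the product of the separate worst-case factors does not overshoot.
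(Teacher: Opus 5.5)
Your proposal is correct and is essentially the paper's proof. Both start from \eqref{eq:trans-left} and get $A^L<2.29$ from $(A^L)^2\le2(2+A_0)$. Both then push $B^L(1+x)^{-1/2}$ below $3.6$ with the same factor estimates; the paper only groups them differently, as $(B^L)^2<7.7/[n(n+a)]$ together with $(n+1)(n+a+1)<3.25\,n(n+a)$.

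The one detail you skip is the endpoint $x=1$, where Lemma~\ref{lem:transition} was not stated; the paper covers it by letting $x\uparrow1$ and using continuity. Your closing ``recheck'' is unnecessary, because a product of separate worst-case factor bounds is already a valid uniform bound.
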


\begin{proof}
Since $b+1\le1+A_0$ and $n+a\ge1+A_0$ we have $n+a+b+1\le2(n+a)$, and
$n+b+1\le(2+A_0)n$, so $(A^L_{n,a,b})^2\le2(2+A_0)<5.21$ and $A^L\le2.29$. Next,
$(2n+a+b+1)/(2n+a+b)\le1+(2+A_0-\tfrac12)^{-1}<1.48$, whence
$(B^L_{n,a,b})^2\le2(1+A_0)^2\cdot1.48/[n(n+a)]<7.7/[n(n+a)]$. Since
$(1+x)^{-1}<\rho_{n,a}^{-1}=\tfrac12(n+1)(n+a+1)$ and
$(n+1)(n+a+1)\le2\cdot\tfrac{2+A_0}{1+A_0}\,n(n+a)<3.25\,n(n+a)$ for $n\ge1$ and
$a\ge A_0$, we get $B^L_{n,a,b}(1+x)^{-1/2}<\sqrt{7.7\cdot3.25/2}<3.6$. Lemma~\ref{lem:transition} was stated for $-1<x<1$; the endpoint $x=1$ is
included by letting $x\uparrow1$, all three terms being continuous there.
\end{proof}

\begin{corollary}\label{cor:compact-transition}
Let $a,b\in[-\tfrac12,A_0]$ and suppose
\begin{equation}\label{eq:compact-bulk}
1+x>\frac2{(n+1)^2}\qquad\text{and}\qquad 1-x>\frac2{(n+1)^2}.
\end{equation}
If $n\ge1$ then
\begin{equation}\label{eq:compact-one-step}
F_{n,a,b}(x)\le\theta_L\bigl\{F_{n-1,a,b+1}(x)+F_{n-1,a,b+2}(x)\bigr\},
\qquad \theta_L=5.6 ,
\end{equation}
with $\theta_L=3.3$ if $n\ge2$; and if $n\ge2$ then
\begin{equation}\label{eq:compact-two-step}
F_{n,a,b}(x)\le\theta_2\sum_{i=1}^2\sum_{j=1}^2F_{n-2,a+i,b+j}(x),
\qquad \theta_2=17 .
\end{equation}
\end{corollary}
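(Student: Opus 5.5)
The plan is to follow the proof of Corollary~\ref{cor:mixed-transition}, starting from Lemma~\ref{lem:transition}. The left transition \eqref{eq:trans-left} controls the singular factor $(1+x)^{-1/2}$, and the bulk condition \eqref{eq:compact-bulk} gives $(1+x)^{-1/2}<(n+1)/\sqrt2$. So the one-step bound \eqref{eq:compact-one-step} comes down to estimating $A^L_{n,a,b}$ and $(n+1)B^L_{n,a,b}/\sqrt2$ uniformly for $a,b\in[-\tfrac12,A_0]$. The second alternative of \eqref{eq:compact-bulk} is not used in this step; it is kept so that the statement is symmetric and can feed the two-step bound.

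\textbf{One-step constants.} From \eqref{eq:ALBL},
\[
(A^L)^2=\frac{(n+a+b+1)(n+b+1)}{n(n+a)}.
\]
With $n+a\ge n-\tfrac12$ and both numerator factors at most $n+2A_0+1$ and $n+A_0+1$, the worst case is $n=1$, $a=-\tfrac12$. This gives $(A^L)^2\le(2+2A_0)(2+A_0)/\tfrac12$, roughly $16.7$, so $A^L$ is about $4.1$.

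For $B^L$, use $(b+1)^2\le(1+A_0)^2$ and $2n(2n+a+b+1)/((2n+a+b)(n+a))$. The latter is largest at $n=1$, $a=b=-\tfrac12$, where it equals $2\cdot2/(1\cdot\tfrac12)=8$. Multiplying by $(n+1)^2/(2n^2)\le2$ and taking the square root bounds $B^L(1+x)^{-1/2}$ by a number of order $5$. Then $\theta_L$ is the maximum of the two coefficients, and the claimed value $5.6$ should come out of this arithmetic.

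For $n\ge2$ the same estimates improve. We have $n+a\ge\tfrac32$ and $(n+1)/n\le\tfrac32$, and I expect this to give $3.3$. I would check the extremes $n=2$ and $n\to\infty$ (where $A^L\to1$ and the $B^L$ term tends to about $(1+A_0)/\sqrt2\cdot\sqrt2$). The expressions are monotone enough in $n$ that the endpoint cases suffice. The $n\to\infty$ limit must also be checked carefully.

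\textbf{Two-step bound.} For \eqref{eq:compact-two-step} I apply the right transition \eqref{eq:trans-right} to the degree-$n$ function first. This produces $F_{n-1,a+1,b}$ and $F_{n-1,a+2,b}$ with a coefficient bounded as above by symmetry, using $1-x>2/(n+1)^2$. The parameters $a+1$ and $a+2$ now lie outside $[-\tfrac12,A_0]$, so I cannot simply reuse the compact estimate for the second step. Instead I apply \eqref{eq:trans-left} at degree $n-1$ with first parameter $a+i\ge\tfrac12$. The singular factor is now controlled by $1+x>2/(n+1)^2\ge2/(2n)^2$ expressed at degree $n-1$, losing a factor of at most $(n+1)/n\le\tfrac32$.

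I must bound $A^L_{n-1,a+i,b}$ and $nB^L_{n-1,a+i,b}$ uniformly in $i\in\{1,2\}$. Larger $a$ only helps $B^L$, since $n+a$ appears in the denominator. For $A^L$, the ratio $(n-1+a+i+b+1)/(n-1+a+i)$ stays bounded by about $1+(1+A_0)/(1+\tfrac12)$. Multiplying the two layers of constants gives $\theta_2$ as roughly the product of two constants, each about $4$, which is consistent with $17$.

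\textbf{Main obstacle.} The hardest part is the bookkeeping at the smallest degrees ($n=1$ for the one-step bound, $n=2$ for the two-step bound, where $n-1=1$). There $n+a$ can be as small as $\tfrac12$, and the stated constants leave little margin. I would tabulate the worst-case parameter corners explicitly rather than rely on crude inequalities.
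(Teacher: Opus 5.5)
Your route is the paper's. The one-step bound comes from \eqref{eq:trans-left} together with $(1+x)^{-1/2}<(n+1)/\sqrt2$. Your two-step ordering (right transition at degree $n$, then left at degree $n-1$) is the mirror image, under $x\mapsto -x$ and $a\leftrightarrow b$, of the paper's ordering (left, then right). It produces the same four terms with the same constants.

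For $n\ge2$ your decoupled estimates do give $3.3$. The $B^L$ term is at most $(1+A_0)^2(n+1)^2/(n-\tfrac12)^2$, which decreases in $n$ and is $\approx10.29$ at $n=2$. In the second layer of the two-step bound the shifted parameter is at least $\tfrac12$, so decoupled bounds give $\max\{A,(1\pm x)^{-1/2}B\}<4.9$. With the first layer at $3.3$ (not $5.6$), this yields $3.3\cdot4.9<17$. That second factor is about $4.8$, not the ``about $4$'' you estimate, so the margin is thinner than you suggest. Also, in that second layer $n+a$ cannot drop to $\tfrac12$, contrary to your closing remark.

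The step that fails as written is $n=1$ in \eqref{eq:compact-one-step}. Your decoupled bound for the $B^L$ term at $n=1$ is $(1+A_0)^2\cdot8\cdot2\approx41.1$. That bounds $B^L(1+x)^{-1/2}$ only by about $6.41$, which is not ``of order $5$'' and exceeds $5.6$. The loss comes from taking $(b+1)^2$ at $b=A_0$ while taking the factor $1+1/(2+a+b)$ at $b=-\tfrac12$.

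The paper notes exactly this: the decoupled bound is too lossy at $n=1$. It evaluates \eqref{eq:ALBL} jointly instead. Write
\[
2(B^L_{1,a,b})^2=\frac{4(b+1)^2}{1+a}\Bigl(1+\frac1{2+a+b}\Bigr).
\]
This is increasing in $b$ and decreasing in $a$. Its maximum on $[-\tfrac12,A_0]^2$ is therefore at $(a,b)=(-\tfrac12,A_0)$, where it equals about $30.35$. Likewise $(A^L_{1,a,b})^2\le10.96$ at the same corner. Together these give $\theta_L\le\sqrt{30.36}<5.6$.

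Your stated intention to tabulate the parameter corners would catch this. The arithmetic you actually outline, however, does not deliver the constant claimed in the statement.
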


\begin{proof}
By \eqref{eq:compact-bulk}, $(1\pm x)^{-1/2}\le(n+1)/\sqrt2$. Write \eqref{eq:ALBL} as
\[
\begin{aligned}
(A^L_{n,a,b})^2
 &=\Bigl(1+\frac{b+1}{n+a}\Bigr)\Bigl(1+\frac{b+1}{n}\Bigr),\\
(B^L_{n,a,b})^2
 &\le\frac{2(b+1)^2}{n(n+a)}\Bigl(1+\frac1{2n+a+b}\Bigr).
\end{aligned}
\]
For $a,b\in[-\tfrac12,A_0]$ and $n\ge2$ the first is at most
$(1+\tfrac{1+A_0}{3/2})(1+\tfrac{1+A_0}{2})<3.73$, so $A^L\le1.94$; the second is
at most $\tfrac{2(1+A_0)^2}{n(n-1/2)}(1+\tfrac1{2n-1})$, and multiplying by
$(1+x)^{-1}\le\tfrac12(n+1)^2$ gives a quantity that decreases in $n$ and is
$<10.3$ at $n=2$. Hence $\max\{A^L,(1+x)^{-1/2}B^L\}<3.3$. For $n=1$ the decoupled bound is too lossy, and we evaluate
\eqref{eq:ALBL} directly: on $[-\tfrac12,A_0]^2$ the maxima of
$(A^L_{1,a,b})^2$ and of $\tfrac12(n+1)^2(B^L_{1,a,b})^2=2(B^L_{1,a,b})^2$ are
$10.96$ and $30.36$, both attained at $a=-\tfrac12$, $b=A_0$; this gives $5.6$.

For \eqref{eq:compact-two-step}, apply \eqref{eq:compact-one-step} at degree
$n\ge2$ and then \eqref{eq:trans-right} to each of $F_{n-1,a,b+j}$, $j=1,2$. With
$m=n-1\ge1$, $a\in[-\tfrac12,A_0]$ and $b'=b+j\in[\tfrac12,A_0+2]$,
\[
(A^R_{m,a,b'})^2=\Bigl(1+\frac{a+1}{m+b'}\Bigr)\Bigl(1+\frac{a+1}{m}\Bigr)
\le\Bigl(1+\tfrac{1+A_0}{3/2}\Bigr)(2+A_0)<5.39,
\]
so $A^R\le2.33$; and
$(B^R_{m,a,b'})^2\le\tfrac{2(1+A_0)^2}{m(m+1/2)}(1+\tfrac1{2m})$, which after
multiplication by $(1-x)^{-1}\le\tfrac12(m+2)^2$ is $<23.2$, decreasing in $m$.
Hence $\max\{A^R,(1-x)^{-1/2}B^R\}<4.9$ and $\theta_2\le3.3\cdot4.9<17$. The factor
$(1+x)^{-1/2}$ is used only under the first inequality in \eqref{eq:compact-bulk}
and $(1-x)^{-1/2}$ only under the second.
\end{proof}

\section{The mixed strips}\label{sec:mixed}

\begin{theorem}\label{thm:mixed}
For all $n\in\Nzero$, all $\alpha\ge A_0$, all $-\tfrac12\le\beta\le A_0$ and all
$x\in[-1,1]$,
\begin{equation}\label{eq:mixed}
 F_{n,\alpha,\beta}(x)\le900\,\mathcal E_n(\alpha+\beta+1).
\end{equation}
By \eqref{eq:reflection} the same bound holds when $\beta\ge A_0$ and
$-\tfrac12\le\alpha\le A_0$.
\end{theorem}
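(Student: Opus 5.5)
Fix $n$, $\alpha\ge A_0$, $\beta\in[-\tfrac12,A_0]$, and put $S=\alpha+\beta+1\ge A_0+\tfrac12=s_0$. The case $n=0$ is Lemma~\ref{lem:degzero}. So assume $n\ge1$ and split $[-1,1]$ according to the cap of Lemma~\ref{lem:mixed-cap}.

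\emph{Cap.} If $1+x\le\rho_{n,\alpha}$, Lemma~\ref{lem:mixed-cap} gives $F\le22\,\mathcal E_n(S)$ directly.

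\emph{Bulk.} If $1+x>\rho_{n,\alpha}$, apply Corollary~\ref{cor:mixed-transition}:
\[
F_{n,\alpha,\beta}(x)\le3.6\{F_{n-1,\alpha,\beta+1}(x)+F_{n-1,\alpha,\beta+2}(x)\}.
\]
Both shifted parameters satisfy $\beta+j\ge\tfrac12$ and $\beta+2\ge\tfrac32\ge A_0$. For $j=2$ the new pair lies in $[A_0,\infty)^2$, so Theorem~\ref{thm:highhigh} gives $F_{n-1,\alpha,\beta+2}\le16\,\mathcal E_{n-1}(S+2)$.

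For $j=1$ the new parameter $\beta+1\in[\tfrac12,1+A_0]$. If $\beta+1\ge A_0$, Theorem~\ref{thm:highhigh} applies again. Otherwise $\beta+1\in[\tfrac12,A_0)$, which is still a mixed-strip pair. In that case we apply the mechanism once more at degree $n-1$, with parameters $(\alpha,\beta+1)$:
\begin{itemize}
\item If $n-1=0$, use Lemma~\ref{lem:degzero}.
\item If $1+x\le\rho_{n-1,\alpha}$, use Lemma~\ref{lem:mixed-cap}, giving $22\,\mathcal E_{n-1}(S+1)$.
\item Otherwise, Corollary~\ref{cor:mixed-transition} yields terms $F_{n-2,\alpha,\beta+2}$ and $F_{n-2,\alpha,\beta+3}$, both in the high quadrant. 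Theorem~\ref{thm:highhigh} bounds them by $16\,\mathcal E_{n-2}(S+2)$ and $16\,\mathcal E_{n-2}(S+3)$.
\end{itemize}
This second step terminates because $\beta+2\ge\tfrac32>A_0$. So at most two shifts occur, which is what the introduction promised.

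\emph{Returning to $\mathcal E_n(S)$.} The envelopes are pulled back with Lemma~\ref{lem:shift}, using $\mathcal E_{n-1}(S+j)\le1.4\,\mathcal E_n(S)$ for $j\in\{1,2\}$. At the second level the starting point is $S+1\ge s_0$, which gives $\mathcal E_{n-2}(S+1+j)\le1.4\,\mathcal E_{n-1}(S+1)\le1.96\,\mathcal E_n(S)$. The degree-zero fallback $1.30\,\mathcal E_0(S+1)$ is likewise at most $1.3\cdot1.4\,\mathcal E_1(S)$, by the same lemma with $n=1$.

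\emph{Constant.} In the worst branch,
\[
F\le3.6\bigl[\,3.6(16+16)\cdot1.96+16\cdot1.4\,\bigr]\mathcal E_n(S)\approx(3.6\cdot248)\,\mathcal E_n(S)\approx893\,\mathcal E_n(S).
\]
If $1+x\le\rho_{n-1,\alpha}$ holds in the inner step, the inner bracket is instead $22\cdot1.4+22.4$, which is much smaller. So the constant is at most about $900$, matching \eqref{eq:mixed}. The reflected statement follows from \eqref{eq:reflection}.

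\emph{Main obstacle.} The delicate point is bookkeeping for the second application in the bulk, where $1+x>\rho_{n,\alpha}$ need not imply $1+x>\rho_{n-1,\alpha}$. This is why the inner step must again be split into cap and bulk, with the cap handled by Lemma~\ref{lem:mixed-cap} at degree $n-1$. I also need to check that the constants of Corollary~\ref{cor:mixed-transition} and Lemma~\ref{lem:shift} remain valid at the shifted parameters ($b=\beta+1\le A_0$, $S+1\ge s_0$). Keeping the product under $900$ requires care; if it overshoots, I would sharpen the $16$ of Theorem~\ref{thm:highhigh} to $2.5$ for large degree, splitting off the few small-degree cases.
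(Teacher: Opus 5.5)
Your proposal is correct and follows the paper's proof. You use the same cap/bulk split via Lemma~\ref{lem:mixed-cap} and Corollary~\ref{cor:mixed-transition}, and you handle the $\beta+1<A_0$ term by one further application of the same mechanism at degree $n-1$, including its own cap and degree-zero cases, closing with Theorem~\ref{thm:highhigh} and Lemma~\ref{lem:shift}. The paper simply packages your inner step as ``Band~I'' ($A_0-1\le\beta\le A_0$, constant $162$ for every degree) before treating ``Band~II''; this gives $3.6\cdot1.4\cdot178<898$ where you get about $893$.
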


\begin{proof}
Put $S=\alpha+\beta+1$. Then $S\ge s_0=A_0+\tfrac12>1$, so
Lemma~\ref{lem:degzero} disposes of $n=0$. Let $n\ge1$.
If $1+x\le\rho_{n,\alpha}$ we are done by Lemma~\ref{lem:mixed-cap}, with the
constant $22$. So assume $1+x>\rho_{n,\alpha}$ and argue in two bands.

\smallskip
\emph{Band I: $A_0-1\le\beta\le A_0$.} Here $\beta+1$ and $\beta+2$ are both at
least $A_0$, so Corollary~\ref{cor:mixed-transition} followed by
Theorem~\ref{thm:highhigh} gives
\[
\begin{aligned}
 F_{n,\alpha,\beta}(x)
 &\le3.6\cdot16\bigl\{\mathcal E_{n-1}(S+1)
 +\mathcal E_{n-1}(S+2)\bigr\}\\
 &\le3.6\cdot16\cdot2\cdot1.4\,\mathcal E_n(S)\\
 &<162\,\mathcal E_n(S),
\end{aligned}
\]
where the last step is Lemma~\ref{lem:shift}. Together with the cap and
degree-zero estimates, this proves the constant $162$ throughout Band~I, for
every degree.

\smallskip
\emph{Band II: $-\tfrac12\le\beta<A_0-1$.} Now $\beta+2\ge\tfrac32>A_0$, so the
second term is again controlled by Theorem~\ref{thm:highhigh}; and
$\beta+1\in[\tfrac12,A_0)\subset[A_0-1,A_0]$, so the first term falls under Band
I, at degree $n-1$. Using $\alpha+\beta+1\ge A_0+\tfrac12$,
\[
\begin{aligned}
 F_{n,\alpha,\beta}(x)
 &\le3.6\bigl\{162\,\mathcal E_{n-1}(S+1)
 +16\,\mathcal E_{n-1}(S+2)\bigr\}\\
 &\le3.6\cdot1.4(162+16)\mathcal E_n(S)\\
 &<898\,\mathcal E_n(S).
\end{aligned}
\]
Since $A_0-1>-\tfrac12$, the two bands cover $[-\tfrac12,A_0]$, and Band I was
established for every degree before Band II was used. Enlarging $898$ to $900$
proves \eqref{eq:mixed}.
\end{proof}

\section{The low-parameter rectangle, and proof of Theorem~\ref{thm:main}}
\label{sec:proof}

It remains to treat $[-\tfrac12,A_0]^2$. This is not contained in the square
$[-\tfrac12,\tfrac12]^2$ of \cite{ChristiansenRubin}, and we prove it from
scratch, by the same mechanism applied at both endpoints. Put
\[
I=[-\tfrac12,A_0],\qquad I_1=[A_0-1,A_0]\subset I .
\]
We shall use
\begin{equation}\label{eq:Ldef}
 L=(2A_0+5)^{1/4}<1.58.
\end{equation}
Every shifted pair $(a+i,b+j)$ occurring below has total parameter
$S'=a+b+i+j+1$ in $[2,2A_0+5]$; hence
$\mathcal E_m(S')\le L$ for every $m\in\Nzero$. The one-sided shifts used at
degree one have $S'\in[1,2A_0+3]$ and obey the same bound.

\begin{lemma}\label{lem:uppercell}
$F_{n,a,b}(x)\le1720$ for all $n\in\Nzero$, all $a,b\in I_1$ and all
$x\in[-1,1]$.
\end{lemma}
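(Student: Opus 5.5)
The plan is to prove Lemma~\ref{lem:uppercell} by applying the two-sided transport mechanism once at each endpoint, so that both parameters land in the high-parameter quadrant. For $a,b\in I_1=[A_0-1,A_0]$ and $i,j\in\{1,2\}$ we have $a+i,\,b+j\in[A_0,A_0+2]$. Theorem~\ref{thm:highhigh} therefore applies to every shifted pair. Since the total parameter satisfies $S'\le 2A_0+5$, it gives $F_{m,a+i,b+j}\le 16\,\mathcal E_m(S')\le 16L$ for every degree $m$. No induction on the degree is needed: everything reduces to finitely many cases handled by earlier results.

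I would split according to degree and position.
\begin{itemize}
\item \emph{Degree zero.} Lemma~\ref{lem:degzero} gives $F_{0,a,b}\le 1.30\,\mathcal E_0(a+b+1)$. Here $a+b+1\le 2A_0+1$, so this is a small absolute constant.
\item \emph{Endpoint caps, $n\ge1$.} If $x$ lies in either cap \eqref{eq:compact-cap-region}, Lemma~\ref{lem:compact-cap} gives $F\le 24$. This case includes the endpoints $x=\pm1$ themselves.
\item \emph{Degree one in the bulk.} Let $n=1$ and let \eqref{eq:compact-bulk} hold. Then \eqref{eq:compact-one-step} with $\theta_L=5.6$ bounds $F_{1,a,b}$ by $5.6\{F_{0,a,b+1}+F_{0,a,b+2}\}$. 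Each term is at most $1.30L$ by Lemma~\ref{lem:degzero}, using the stated bound for the one-sided shifts at degree one. The result is about $23$.
\item \emph{Degree at least two in the bulk.} Let $n\ge2$ and let \eqref{eq:compact-bulk} hold. Then \eqref{eq:compact-two-step} gives
\[
F_{n,a,b}(x)\le 17\sum_{i,j=1}^2 F_{n-2,a+i,b+j}(x)\le 17\cdot4\cdot16\,L<17\cdot 64\cdot1.58<1720 .
\]
\end{itemize}
Taking the maximum over these cases gives $1720$.

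The main point to check is the final bulk estimate. I must confirm that Theorem~\ref{thm:highhigh} is legitimately applicable to all four pairs $(a+i,b+j)$: both parameters must be at least $A_0$, which is exactly why the cell is $I_1$ rather than all of $I$. I must also confirm that $\mathcal E_{n-2}(S')\le L$ holds uniformly in the degree. This is immediate, because each of the three terms of $\Phi_m(S')$ is at most $\max\{1,S'^{1/2}\}\le (2A_0+5)^{1/2}$.

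The other detail to verify is the domain of the two-step relation. The singular factors $(1\pm x)^{-1/2}$ are used only under the corresponding halves of \eqref{eq:compact-bulk}. The cap lemma covers exactly the complement of that region, so the two regimes together cover all of $[-1,1]$.
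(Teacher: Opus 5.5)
Your proposal is correct and follows the paper's proof essentially verbatim: degree zero by Lemma~\ref{lem:degzero}, the caps by Lemma~\ref{lem:compact-cap}, degree one in the bulk by \eqref{eq:compact-one-step} with $\theta_L=5.6$, and $n\ge2$ in the bulk by \eqref{eq:compact-two-step} together with Theorem~\ref{thm:highhigh} on the four shifted pairs in $[A_0,A_0+2]^2$, which gives $4\cdot17\cdot16L<1720$. Your check that $\mathcal E_m(S')\le L$ holds uniformly in $m$ is the same observation the paper records just before the lemma.
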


\begin{proof}
For $n=0$ this follows from Lemma~\ref{lem:degzero}. Let $n\ge1$. If
\eqref{eq:compact-cap-region} holds, Lemma~\ref{lem:compact-cap} gives the
stronger bound $24$. Otherwise \eqref{eq:compact-bulk} holds. For $n=1$,
\eqref{eq:compact-one-step} and Lemma~\ref{lem:degzero} give
$F_{1,a,b}\le2\cdot5.6\cdot1.30L<24$. For $n\ge2$,
\eqref{eq:compact-two-step} gives four terms $F_{n-2,a+i,b+j}$ whose parameters
all lie in $[A_0,A_0+2]$, since $a,b\ge A_0-1$; each is at most
$16L$ by Theorem~\ref{thm:highhigh}. Hence
$F_{n,a,b}\le4\cdot17\cdot16L<1720$.
\end{proof}

\begin{theorem}\label{thm:compact}
$F_{n,\alpha,\beta}(x)\le\kappa_{\mathrm{cpt}}$ for all $n\in\Nzero$, all
$\alpha,\beta\in[-\tfrac12,A_0]$ and all $x\in[-1,1]$, with
$\kappa_{\mathrm{cpt}}=1.2\cdot10^5$.
\end{theorem}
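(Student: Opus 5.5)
We reduce Theorem~\ref{thm:compact} to Lemma~\ref{lem:uppercell}, Theorem~\ref{thm:mixed} and Theorem~\ref{thm:highhigh} by the same band-splitting used in the proof of Theorem~\ref{thm:mixed}, now applied in both variables through \eqref{eq:compact-two-step}. Degree zero is covered by Lemma~\ref{lem:degzero}, since $\mathcal E_0(S)\le L$ on $I^2$. For $n\ge1$ in the caps \eqref{eq:compact-cap-region}, Lemma~\ref{lem:compact-cap} gives the bound $24$. We may therefore assume $n\ge1$ and that \eqref{eq:compact-bulk} holds.

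For $n=1$ we use \eqref{eq:compact-one-step}, which produces $F_{0,a,b+j}$. Lemma~\ref{lem:degzero} bounds each of these by $1.30L$, so $F_{1,a,b}\le2\cdot5.6\cdot1.30L<24$. For $n\ge2$ we use \eqref{eq:compact-two-step}, which bounds $F_{n,a,b}$ by $17$ times the sum of the four terms $F_{n-2,a+i,b+j}$ with $i,j\in\{1,2\}$. We then split $I=I_1\cup I_0$ with $I_0=[-\frac12,A_0-1)$.

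If $a,b\in I_1$, the bound $1720$ is Lemma~\ref{lem:uppercell} itself. If $a\in I_0$ and $b\in I_1$, then $a+2\ge\frac32>A_0$, $a+1\in[\frac12,A_0)\subset I_1$, and $b+j\in[A_0,A_0+2]$. Every shifted pair then falls either in the high quadrant, handled by Theorem~\ref{thm:highhigh} with constant $16L$, or in the mixed strip with first parameter in $I_1\subset I$ and second at least $A_0$, handled by Theorem~\ref{thm:mixed} with constant $900L$. Hence $F\le17\cdot4\cdot900L$. The case $a\in I_1$, $b\in I_0$ follows in the same way or by \eqref{eq:reflection}.

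The case $a,b\in I_0$ needs more care. There the shifted pair $(a+1,b+1)$ lies in $I_1^2$ and is bounded by Lemma~\ref{lem:uppercell}. The pairs $(a+1,b+2)$ and $(a+2,b+1)$ lie in mixed strips and are bounded by $900L$. The pair $(a+2,b+2)$ is high-high and is bounded by $16L$. The resulting constant is
\[
17\,(1720+2\cdot900L+16L)<17\,(1720+2870)<7.9\cdot10^4<1.2\cdot10^5 .
\]

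The main point to check is that the constants stay absolute and that no circularity enters. Lemma~\ref{lem:uppercell} and Theorem~\ref{thm:mixed} are proved for every degree independently of the present statement, so the recursion terminates after a single two-step application. We must also confirm that $\mathcal E_{n-2}(S')\le L$ for all shifted pairs, including those coming from Theorem~\ref{thm:mixed}, where $S'\le 2A_0+5$. Finally, \eqref{eq:compact-two-step} requires \eqref{eq:compact-bulk}, which is exactly the complement of the caps. The worst of the cases above then gives a constant well below $\kappa_{\mathrm{cpt}}$.
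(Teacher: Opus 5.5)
Your proof is correct and is essentially the paper's argument: degree zero by Lemma~\ref{lem:degzero}, the caps by Lemma~\ref{lem:compact-cap}, degree one by \eqref{eq:compact-one-step}, and for $n\ge2$ the two-step reduction \eqref{eq:compact-two-step} feeding into Theorem~\ref{thm:highhigh}, Theorem~\ref{thm:mixed} and Lemma~\ref{lem:uppercell}. The only difference is bookkeeping. The paper classifies each shifted pair $(\alpha+i,\beta+j)$ and bounds every term by the worst value $1720$, giving $4\cdot17\cdot1720=116960$. You instead split by the band of $(a,b)$, which gives slightly smaller case-wise constants (at most $17\cdot4\cdot900L<9.7\cdot10^4$).
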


\begin{proof}
Degrees $0$ and $1$ are bounded by $24$, exactly as in
Lemma~\ref{lem:uppercell}. For
$n\ge2$, either \eqref{eq:compact-cap-region} holds and
Lemma~\ref{lem:compact-cap} applies, or \eqref{eq:compact-bulk} holds and
\eqref{eq:compact-two-step} gives
\[
F_{n,\alpha,\beta}(x)\le\theta_2\sum_{i=1}^2\sum_{j=1}^2F_{n-2,\alpha+i,\beta+j}(x).
\]
Fix a pair $(\alpha+i,\beta+j)$; all such lie in $[\tfrac12,A_0+2]^2$. If both
entries are at least $A_0$, Theorem~\ref{thm:highhigh} bounds the term by
$16L<26$. If exactly one is, the pair lies in a mixed strip and
Theorem~\ref{thm:mixed} bounds it by $900L<1422$. If neither is, then necessarily
$i=j=1$ and $\alpha,\beta<A_0-1$, so $\alpha+1,\beta+1\in[\tfrac12,A_0)\subset I_1$
and Lemma~\ref{lem:uppercell} bounds the term by $1720$. In all cases the term is
at most $1720$, and
\[
 4\cdot17\cdot1720=116960<1.2\cdot10^5.
\]
\end{proof}

\begin{proof}[Proof of Theorem~\ref{thm:main}]
Let $\alpha,\beta\ge-\tfrac12$. If $\alpha,\beta\ge A_0$, Theorem~\ref{thm:highhigh}
applies. If exactly one of $\alpha,\beta$ is at least $A_0$, then
Theorem~\ref{thm:mixed} applies. If
$\alpha,\beta\in[-\tfrac12,A_0]$, Theorem~\ref{thm:compact} gives
$F\le1.2\cdot10^5$, which is at most
$1.2\cdot10^5\mathcal E_n(\alpha+\beta+1)$ because $\mathcal E_n\ge1$.
These cases exhaust the parameter quadrant; the same constant dominates the
constants $16$ and $900$ in the first two cases, proving \eqref{eq:main}.
\end{proof}

\begin{remark}
Only the shifts $\alpha\mapsto\alpha+i$, $\beta\mapsto\beta+j$ with
$i,j\in\{0,1,2\}$ occur, so all parameters encountered after a reduction lie in
$[-\tfrac12,A_0+2]$ or in the region where Theorem~\ref{thm:highhigh} applies;
this is why no constant depends on $n$, $\alpha$, $\beta$ or $x$. The largest
loss occurs in the two-step reduction of \S\ref{sec:proof}, where four terms are
estimated separately; the resulting numerical constant is not intended to be
close to optimal.
\end{remark}

\section{Lower bounds and quadratic transformations}\label{sec:sharpness}

\begin{proof}[Proof of Theorem~\ref{thm:sharpness}(i)]
For $\alpha=\beta=-\tfrac12$ one has $w_{\alpha,\beta}=(1-x^2)^{-1/2}$,
$p_0=\pi^{-1/2}$ and $p_n=\sqrt{2/\pi}\,T_n$ for $n\ge1$, while
\eqref{eq:Fsquare} reduces to $F^2=p_n^2$; this gives the Chebyshev identity in
the while-clause of part~(i). For
$\alpha=\beta=\tfrac12$, $p_n=\sqrt{2/\pi}\,U_n$ and
$F^2=(1-x^2)p_n^2=\tfrac2\pi\sin^2((n+1)\theta)$.

For $\alpha=\tfrac12$, $\beta=-\tfrac12$, put
$W_n(\cos\theta)=\sin\bigl((n+\tfrac12)\theta\bigr)/\sin(\theta/2)$, a polynomial of
degree $n$. Since $w_{1/2,-1/2}(x)\dd x=2\sin^2(\theta/2)\dd\theta$ under
$x=\cos\theta$, orthogonality of the $W_n$ is the orthogonality of
$\{\sin((n+\tfrac12)\theta)\}$ on $[0,\pi]$, and
$\int_{-1}^1W_n^2w_{1/2,-1/2}=\pi$. Hence $p_n=\pi^{-1/2}W_n$ up to sign, and
\[
F^2=(1-x)\,p_n^2=2\sin^2(\theta/2)\cdot\frac1\pi\,
\frac{\sin^2((n+\tfrac12)\theta)}{\sin^2(\theta/2)}
=\frac2\pi\sin^2\bigl((n+\tfrac12)\theta\bigr).
\]
The second displayed formula follows from \eqref{eq:reflection}, since replacing $\theta$ by
$\pi-\theta$ turns $|\sin((n+\tfrac12)\theta)|$ into $|\cos((n+\tfrac12)\theta)|$.

For $(\alpha,\beta)=(\pm\tfrac12,\mp\tfrac12)$, and for
$(-\tfrac12,-\tfrac12)$ with $n\ge1$, one has $\alpha+\beta+1\le1$, so
$\mathcal E_n(\alpha+\beta+1)=M(\alpha,\beta)=1$ while
$\max_xF=\sqrt{2/\pi}$. This proves the assertion about the constants.
\end{proof}

\begin{proof}[Proof of Theorem~\ref{thm:sharpness}(ii)]
Since $\beta=-\tfrac12$, \eqref{eq:Fsquare} gives
\[
 F_{n,\alpha,-1/2}(-1)^2
 =\frac{2^{\alpha+1/2}}{H_n^{\alpha,-1/2}}
 \left|P_n^{(\alpha,-1/2)}(-1)\right|^2,
\]
where
\[
 P_n^{(\alpha,-1/2)}(-1)
 =(-1)^n\frac{\Gamma(n+\tfrac12)}{\sqrt\pi\,\Gamma(n+1)}.
\]
Substituting \eqref{eq:norm} gives \eqref{eq:sharp-exact}. At the single point
$(n,\alpha)=(0,-\tfrac12)$ formula \eqref{eq:norm} is unavailable; by
\eqref{eq:norm0},
\[
 H_0^{-1/2,-1/2}=\pi,
 \qquad F_{0,-1/2,-1/2}(-1)^2=\frac1\pi.
\]
This is also the limit of the right-hand side of \eqref{eq:sharp-exact}, as is
seen from
\[
 (2n+\alpha+\tfrac12)\Gamma(n+\alpha+\tfrac12)
 =\frac{2n+\alpha+1/2}{n+\alpha+1/2}
 \Gamma(n+\alpha+\tfrac32).
\]
Finally, for fixed $n$,
\[
 \frac{\Gamma(n+\alpha+\tfrac12)}{\Gamma(n+\alpha+1)}
 \sim\alpha^{-1/2},
\]
which gives \eqref{eq:sharp-asymptotic}.
\end{proof}

\begin{proof}[Proof of Theorem~\ref{thm:sharpness}(iii)]
For fixed $A\ge-\tfrac12$ and $z>0$, the Mehler--Heine formula
\cite[Eq.~18.11.5]{DLMF} gives
\[
 n^{-A}P_n^{(A,A)}\!\left(\cos\frac zn\right)
 \longrightarrow (z/2)^{-A}J_A(z).
\]
On the other hand, \eqref{eq:norm} and the elementary gamma-ratio asymptotic
give
\[
 H_n^{A,A}\sim\frac{2^{2A}}n.
\]
Since
\[
 F_{n,A,A}\!\left(\cos\frac zn\right)
 =\left(\sin\frac zn\right)^{A+1/2}
 \frac{\left|P_n^{(A,A)}(\cos(z/n))\right|}{\sqrt{H_n^{A,A}}},
\]
the powers of $n$, $2$ and $z$ cancel, proving \eqref{eq:bessel-limit}.

At the turning point, the large-order Bessel asymptotic
\cite[Eq.~10.19.8]{DLMF} is
\begin{equation}\label{eq:bessel-turning}
 J_A(A)\sim2^{1/3}\operatorname{Ai}(0)A^{-1/3}.
\end{equation}
Here $\operatorname{Ai}(0)=1/(3^{2/3}\Gamma(2/3))$
\cite[Eq.~9.2.3]{DLMF}.
Thus $\sqrt A\,J_A(A)$ is asymptotic to a positive constant times $A^{1/6}$.
For each sufficiently large $A$, apply \eqref{eq:bessel-limit} with $z=A$ and
choose $n_A$ so large that the value of $F$ is at least one half of its positive
limit and that $n_A+1\ge(2A+1)^3$. This proves \eqref{eq:bessel-lower}. Finally,
with $S=2A+1$,
\[
 \frac{S^{1/4}}{(n_A+1)^{1/12}}\le1,
\]
so the lower bound can only be accounted for by the $S^{1/6}$ term.
\end{proof}

The next identity shows that the two lines $\beta=-\tfrac12$ and
$\beta=\tfrac12$ inside a mixed strip are the even and odd ultraspherical
cases, giving an independent reduction there to the ultraspherical estimates; it
also explains why the fixed-degree example \eqref{eq:sharp-asymptotic} is
elementary. The transformations used here apply precisely on the two lines where
one parameter equals $\pm\tfrac12$.

\begin{proposition}\label{prop:quadratic}
Let $\alpha\ge-\tfrac12$, $m\in\Nzero$ and $0\le x\le1$, and set $t=2x^2-1$.
Then
\begin{equation}\label{eq:quadratic}
F_{m,\alpha,-\frac12}(t)=F_{2m,\alpha,\alpha}(x),
\qquad
F_{m,\alpha,\frac12}(t)=F_{2m+1,\alpha,\alpha}(x).
\end{equation}
\end{proposition}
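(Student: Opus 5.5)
The plan is to prove both identities through the classical quadratic transformations (Szeg\H{o} Thm.~4.1):
\[
P_{2m}^{(\alpha,\alpha)}(x)=c_m\,P_m^{(\alpha,-1/2)}(2x^2-1),\qquad
P_{2m+1}^{(\alpha,\alpha)}(x)=d_m\,x\,P_m^{(\alpha,1/2)}(2x^2-1),
\]
with explicit constants $c_m,d_m$ given by gamma quotients. Rather than track these constants, I will fix them up to sign by orthonormality, so only the polynomial identities themselves need to be invoked.

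First I compare the weights. With $t=2x^2-1$ we have $1-t=2(1-x^2)$ and $1+t=2x^2$. For $x\in(0,1]$ the substitution gives $dt=4x\,dx$. Hence
\[
(1-t)^\alpha(1+t)^{\mp1/2}\,dt=2^{\alpha\mp1/2}\,4\,(1-x^2)^\alpha x^{1\mp1}\,dx .
\]
Suppose $q$ is a polynomial of degree $m$ orthonormal for $w_{\alpha,-1/2}$. Then $x\mapsto q(2x^2-1)$ is even of degree $2m$. Using evenness to double the integral from $[0,1]$ to $[-1,1]$, its square norm for $w_{\alpha,\alpha}$ on $[-1,1]$ is $2^{-\alpha-1/2}$. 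It is orthogonal to all polynomials of lower degree: odd ones by parity, even ones since they are polynomials in $2x^2-1$. Therefore
\[
p_{2m}^{(\alpha,\alpha)}(x)=\pm2^{(\alpha+1/2)/2}\,p_m^{(\alpha,-1/2)}(2x^2-1).
\]
The odd case runs the same way with the polynomial $x\,q(2x^2-1)$. The $x^2$ in the norm integral produces the weight $(1+t)^{1/2}$ times $2^{-1/2}$. The result is $p_{2m+1}^{(\alpha,\alpha)}(x)=\pm2^{(\alpha+3/2)/2}\,x\,p_m^{(\alpha,1/2)}(2x^2-1)$, with the constant to be confirmed in the computation.

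Then I plug into \eqref{eq:Fsquare}. For the even case,
\[
F_{m,\alpha,-1/2}(t)^2=(1-t)^{\alpha+1/2}\,p_m^2=2^{\alpha+1/2}(1-x^2)^{\alpha+1/2}\,p_m^{(\alpha,-1/2)}(t)^2 .
\]
Also $F_{2m,\alpha,\alpha}(x)^2=(1-x^2)^{\alpha+1/2}\,p_{2m}^{(\alpha,\alpha)}(x)^2$. These agree by the normalisation just found.

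For the odd case,
\[
F_{m,\alpha,1/2}(t)^2=(1-t)^{\alpha+1/2}(1+t)\,p_m^2=2^{\alpha+3/2}(1-x^2)^{\alpha+1/2}x^2\,p_m^2 ,
\]
which again matches. The endpoint $x=0$ and the degree-zero cases with $\alpha=-\tfrac12$ follow by continuity and \eqref{eq:norm0}. The cleanest route for these is the same orthonormality argument, which never divides by an indeterminate gamma factor.

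The main obstacle is bookkeeping: getting the powers of $2$ in the norm comparison exactly right so that the constants cancel. Choosing the orthonormality route instead of Szeg\H{o}'s explicit $c_m,d_m$ is what keeps this manageable and avoids the $n=0$, $\alpha=-\tfrac12$ degeneracy.
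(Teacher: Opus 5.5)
Your proposal is correct and takes essentially the same route as the paper. Both arguments use the substitution $t=2x^2-1$, parity and orthonormality to identify $p_{2m}^{(\alpha,\alpha)}$ and $p_{2m+1}^{(\alpha,\alpha)}$ up to sign, with the constants $2^{(\alpha+1/2)/2}$ and $2^{(\alpha+3/2)/2}$ (both of which you have right), and then compare the two sides of \eqref{eq:Fsquare} via $1-t=2(1-x^2)$ and $1+t=2x^2$. Like the paper, you avoid Szeg\H{o}'s explicit gamma constants, so the degenerate case $(m,\alpha)=(0,-\tfrac12)$ is handled automatically.
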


\begin{proof}
The substitution $t=2x^2-1$ maps $[0,1]$ bijectively onto $[-1,1]$ and
\[
w_{\alpha,-1/2}(t)\dd t=2^{\alpha+3/2}(1-x^2)^{\alpha}\dd x
\qquad(0\le x\le1).
\]
Hence if $q_m$ is orthonormal for $w_{\alpha,-1/2}$ on $[-1,1]$, then
\[
 \int_{-1}^1q_m(2x^2-1)^2w_{\alpha,\alpha}(x)\dd x
 =2^{-(\alpha+1/2)}.
\]
Thus $c\,q_m(2x^2-1)$, with $c=2^{(\alpha+1/2)/2}$, has unit norm for
$w_{\alpha,\alpha}$.
It is an even polynomial of degree $2m$, hence orthogonal to all even polynomials
of lower degree and, by parity, to all odd ones; therefore
$c\,q_m(2x^2-1)=\pm p_{2m}^{(\alpha,\alpha)}(x)$. Since $1-t=2(1-x^2)$ and
$(1+t)^0=1$,
\[
\begin{aligned}
F_{2m,\alpha,\alpha}(x)
&=(1-x^2)^{\frac{2\alpha+1}4}
  \bigl|p_{2m}^{(\alpha,\alpha)}(x)\bigr|\\
&=\bigl(2(1-x^2)\bigr)^{\frac{2\alpha+1}4}|q_m(t)|\\
&=(1-t)^{\frac{\alpha}{2}+\frac14}|q_m(t)|
 =F_{m,\alpha,-\frac12}(t).
\end{aligned}
\]
The odd case is identical, with $c\,x\,q_m(2x^2-1)=\pm p_{2m+1}^{(\alpha,\alpha)}(x)$,
$c=2^{(\alpha+3/2)/2}$, and $(1+t)^{1}=2x^2$ supplying the extra factor.
\end{proof}

\section{A uniform Christoffel bound}\label{sec:christoffel}

\begin{proof}[Proof of Theorem~\ref{thm:christoffel}]
Put $S=\alpha+\beta+1$. By \eqref{eq:Fsquare}, Theorem~\ref{thm:main} gives
\[
 p_k^{(\alpha,\beta)}(x)^2
 \le\frac{C_*^2\Phi_k(S)}{w_{\alpha,\beta}(x)\sqrt{1-x^2}}
\]
for every $k\in\Nzero$ and $-1<x<1$. With $N=n+1$,
\begin{align*}
 \sum_{k=0}^n\Phi_k(S)
 &\le N\max\{1,S^{1/3}\}
   +S^{1/2}\sum_{j=1}^N j^{-1/6}\\
 &\le N\Phi_n(S)+\frac65S^{1/2}N^{5/6}
 <3N\Phi_n(S),
\end{align*}
where $\sum_{j=1}^Nj^{-1/6}\le1+\int_1^Nt^{-1/6}\dd t
\le\tfrac65N^{5/6}$. Summing the preceding polynomial estimate and inverting
now gives \eqref{eq:christoffel}. The Gauss--Jacobi
weights for the $(n+1)$-point rule are the values $\lambda_n^{(\alpha,\beta)}(x_k)$
at the zeros $x_k$ of $p_{n+1}^{(\alpha,\beta)}$ (see \cite[Thm.~3.4.1]{Szego}), so
they obey the same bound.
\end{proof}

\begin{remark}\label{rem:CD}
Put $S=\alpha+\beta+1$. The same input bounds the Christoffel--Darboux kernel
\[
 K_n(x,y)=\sqrt{w_{\alpha,\beta}(x)w_{\alpha,\beta}(y)}
 \sum_{k=0}^np_k^{(\alpha,\beta)}(x)p_k^{(\alpha,\beta)}(y).
\]
By Cauchy--Schwarz, $|K_n(x,y)|\le K_n(x,x)^{1/2}K_n(y,y)^{1/2}$, and the
diagonal is at most $3C_*^2(n+1)\Phi_n(S)(1-x^2)^{-1/2}$, so
\[
|K_n(x,y)|\le\frac{3C_*^2(n+1)\Phi_n(S)}
{(1-x^2)^{1/4}(1-y^2)^{1/4}},\qquad -1<x,y<1 .
\]
This is a bound on the correlation kernel of the Jacobi unitary ensemble, uniform
in the parameters; the point is that $\alpha$ and $\beta$ are allowed to grow with
$n$.
\end{remark}

\begin{remark}\label{rem:companion}
For $a,b\ge0$ let $g_n^{(a,b)}$ be as in \eqref{eq:gdef} and put
$D=2n+a+b+1$, so that \eqref{eq:bridge} reads
$F_{n,a,b}=\sqrt{D/2}\,(1-x^2)^{1/4}|g_n^{(a,b)}|$. Theorem~\ref{thm:main} is thus
equivalent to the statement that
\[
|g_n^{(a,b)}(x)|\le
\frac{\sqrt2\,C_*\mathcal E_n(a+b+1)}{D^{1/2}(1-x^2)^{1/4}},
\qquad -1<x<1 .
\]
This decays in the degree but is singular at the hard edges; a bound of the
complementary type, finite at the edges but without degree decay, is the subject
of \cite{LiKKT}, and the two combine into a single envelope there. We record the
normalisation here only to fix the interface; nothing above depends on it.
\end{remark}

\section{The sharp EMN constant}\label{sec:open}

Theorem~\ref{thm:main} settles the degree--parameter order conjectured by
Krasikov. Theorem~\ref{thm:sharpness}(iii) also shows why the intermediate
$S^{1/6}$ term is essential: an envelope containing only the constant term and
$S^{1/4}(n+1)^{-1/12}$ would be false. A remaining problem is the sharp constant
in the weaker EMN corollary.

Theorem~\ref{thm:sharpness}(i) forces that constant to be at least
$\sqrt{2/\pi}$. The Chebyshev identities suggest the following refinement.

\begin{conjecture}\label{conj:constant}
For all $n\in\Nzero$, $\alpha,\beta\ge-\tfrac12$ and $x\in[-1,1]$,
\[
 F_{n,\alpha,\beta}(x)
 \le\sqrt{\frac2\pi}\,\max\{1,(\alpha+\beta+1)^{1/4}\}.
\]
For every $n\ge1$, equality occurs at
$(\alpha,\beta)\in\{(-\tfrac12,-\tfrac12),(\tfrac12,-\tfrac12),
(-\tfrac12,\tfrac12)\}$ at suitable points $x$.
\end{conjecture}

\end{document}